\documentclass[10pt]{amsart}
\usepackage{amssymb}
\usepackage{esint}

\usepackage{color}
\def\R{\mathbb{R}}

\def\p{\phi}
\def\vp{\varphi}
\def\eto{\eta^\circ}
\def\po{\phi^\circ}
\def\pso{\psi^\circ}
\def\Div{\textup{div}}
\def\dist{\textup{dist}}

\def\e{\varepsilon}

\def\wtos{\stackrel{*}{\rightharpoonup}}
\newcommand{\Id}{\textit{Id}}
\renewcommand{\H}{\mathcal{H}}
\newcommand{\beq}{\begin{equation}}
\newcommand{\eeq}{\end{equation}}
\newcommand{\pa}{\partial}
\newcommand{\noop}[1]{} 
\theoremstyle{plain}
\newtheorem{theorem}{Theorem}[section]
\newtheorem{proposition}[theorem]{Proposition}

\newtheorem{lemma}[theorem]{Lemma}
\newtheorem{definition}[theorem]{Definition}
\theoremstyle{remark}
\newtheorem{remark}[theorem]{Remark}

\newcommand{\dd}{d}
\newcommand{\N}{\mathbb N}
\newcommand{\newatop}{\genfrac{}{}{0pt}{1}}


\numberwithin{equation}{section}

\title[Crystalline limits of anisotropic curvature flows]{Generalized crystalline evolutions as limits of flows with smooth anisotropies}
\author[A. Chambolle \and M. Morini \and M. Novaga\and M. Ponsiglione]{Antonin Chambolle \and Massimiliano Morini \and Matteo Novaga\and Marcello Ponsiglione}





\begin{document}

\begin{abstract} 
We prove existence and
uniqueness of weak solutions to anisotropic and crystalline mean curvature flows, obtained as limit of the viscosity solutions to flows with smooth anisotropies.

\smallskip 
\noindent Keywords: Geometric evolution equations, 
Crystalline mean curvature flow, level set formulation

\smallskip
\noindent MSC 2010:
53C44, 	49M25, 35D40. 
\end{abstract}

\maketitle

\tableofcontents

\bibliographystyle{plain}

\section{Introduction}
In this note we deal with anisotropic, and possibly  crystalline, mean curvature flows, that is,  flows of sets $t\mapsto E(t)$ governed by the law 
\begin{equation}\label{oee} 
V(x,t) = -\psi(\nu^{E(t)}) (\kappa^{E(t)}_{\p}(x) + g(x,t)),
\end{equation}
where  $V(x,t)$ stands for the outer normal velocity of the boundary $\pa E(t)$ at $x$, $\p$ is a given norm on $\R^N$ representing the {\it surface tension}, $\kappa^{E(t)}_{\p}$ is the {\em anisotropic mean curvature} of $\pa E(t)$ associated with the anisotropy $\p$,  $\psi$ is a norm  evaluated at the outer unit normal $\nu^{E(t)}$ to $\pa E(t)$, and $g$ is a forcing term. The factor $\psi$ plays the role of a {\em mobility}. 

We refer to~\cite{CMNP17} for the motivations to study this flow, which originate in problems from phase transitions and materials science (see for instance \cite{Taylor78, Gurtin93} and references therein). Its  mathematical well-posedness is established in the smooth setting, that is when  {$\p$, $\psi$, $g$} and the initial set are sufficiently smooth and $\p$ satisfies suitable ellipticity conditions.  However, it is also well-known that in dimensions $N\geq 3$  singularities may form in finite time even in the smooth case and for regular initial sets.  When this occurs the strong formulation of  \eqref{oee}  ceases to be meaningful and thus needs to be replaced by  weaker notions of global-in-time solution.

Among the different weak approaches that have been proposed in the literature for the classical mean curvature flow (and for several other ``regular'' flows) here we recall   the so-called {\em level set formulation}~\cite{OS,EvansSpruckI,EvansSpruckII, CGG, GigaBook} and the {\em flat flow formulation} proposed by Almgren, Taylor and Wang  \cite{ATW} and based on the {\em minimizing movements} variational scheme (referred to as the ATW scheme).

However, when the anisotropy $\p$ in \eqref{oee} is non-differentiable or crystalline, the lack of smoothness of the involved differential operators makes it much harder to  pursue   the aforementioned approaches. In fact, in the crystalline case the problem of finding a suitable weak formulation of \eqref{oee}  in dimension $N\geq 3$ leading to a unique global-in-time solution for general initial sets has remained open until the very recent works \cite{CMP4, GigaPozar,  CMNP17, GigaPozar17}. 

We refer also to \cite{GiGuMa98, CaCha, BelCaChaNo} for previous  results holding for  special classes of initial data, and  to \cite{GigaGigaPozar}   for  a well-posedness result dealing with a very specific anisotropy. The two-dimensional case is somehwat easier and has been essentially settled in \cite{GigaGiga01} (when $g$ is   constant) by developing a crystalline version of the viscosity approach for the level-set equation, see also ~\cite{Taylor78,AlmTay95, AngGu89, GigaGiga98, GiGu96} for relevant former work. We also mention the recent papers
\cite{ChaNov-Crystal15, MNP}, where short time existence and uniqueness
of strong solutions for initial ``regular'' sets
(in a suitable sense) is  shown.
\smallskip

Let us now briefly describe the most recent progress on the problem. 
In \cite{CMP4},  the first  global-in-time existence and uniqueness result for the level set flow associated to \eqref{oee},  valid in all dimensions, for arbitrary (possibly unbounded) initial sets, and for general (including crystalline) anisotropies $\p$ {was} established,  but under the particular choice $\psi=\p$ (and $g=0$). The main contribution of that work is the observation that
 the variant of the ATW scheme proposed in \cite{Chambolle, CaCha}) converges to solutions that  satisfy a  new stronger {\em distributional}  formulation of the problem in terms of distance functions. Such a formulation is only reminiscent of, but not quite the same as, the distance formulation studied in \cite{Soner93} (see also \cite{BaSoSou,AmbrosioSoner,CaCha, AmbrosioDancer}), and because of its distributional character it   enables the use of parabolic  PDE's arguments in order to establish a  comparison result yielding uniqueness. 

In \cite{CMNP17}, we first observe that the methods of \cite{CMP4} can be pushed to treat  bounded spatially Lispchitz continuous forcing terms $g$ and more general mobilities $\psi$, which  are  ``regular'' with respect to the anisotropy $\p$. More precisley,  a norm $\psi$ is said to be $\p$-regular if the associated $\psi$-Wulff shape $W^\psi$  satisfies a uniform inner $\p$-Wulff shape condition at all points of its boundary. Such a condition implies that the $\p$-curvature $k_\p$ of $\pa W^\psi$ is bounded above and it enables us to show that a distributional formulation in the spirit of \cite{CMP4} still holds true. Next, owing to the simple   observation  that the  $\p$-regular mobilities are dense, we succeed  in extending the notion of solution to general mobilities by an approximation procedure. More precisely,  
by establishing delicate stability estimates on the ATW scheme, we show that if $\psi$ is any norm and $\psi_n\to \psi$, with $\psi_n$ a $\p$-regular mobility for every $n$, then the corresponding distributional level set solutions $u^{\psi_n}$, with the given initial datum $u^0$,  admit a unique limit $u^\psi$ (independent of the choice of the approximating ${\psi_n}$), which we may therefore regard as the unique solution to the level set flow with mobility $\psi$ and initial datum $u^0$. 
As a byproduct of this analysis, we also settle the problem of the uniqueness (up to fattening) of flat flows for general mobilities. Once again, our results hold   in all dimensions, for arbitrary (possibly unbounded) initial sets and  general, possibly  crystalline anisotropies $\p$. 

By completely different methods, in \cite{GigaPozar} and  more recently in \cite{GigaPozar17}, the authors succeed in extending the viscosity approach of \cite{GigaGiga01} to the case $N=3$ and to the general case $N\geq 3$, respectively. In fact, as in \cite{GigaGiga01} they  are able to deal with very general equations of the form 
 $$
 V=f(\nu,-\kappa^E_\p)\,,
 $$
  with $f$ continuous and non-decreasing with respect to the second variable,
but without spatial dependence, 
establishing existence and uniqueness for the corresponding level set formulation. 
However,  their method, as far as we know,  works only for 
purely crystalline anisotropies $\p$, bounded initial sets, and constant forcing terms.
\smallskip

Here we propose a variant  of the approach of \cite{CMNP17}, by deriving
  existence,
uniqueness and some properties of anisotropic and crystalline flows
directly from the corresponding properties of smooth (i.e., with smooth anisotropies) flows,
appropriately defined as viscosity solutions of a geometric
PDE.  
This leads to a more direct and easier proof of the well-posedness of \eqref{oee} for general mobilities and anisotropies, relying on purely viscosity methods, which however does not 
provide any information about the uniqueness of flat flows.

Let us describe the new approach in more detail.  The starting point is the observation that when the anisotropy is smooth, the distributional formulation of \cite{CMP4, CMNP17} is equivalent to the classical viscosity formulation, see Section~\ref{sec:vs}. Next, in Section~\ref{subsec:levelset} we show that if $\phi_n\to  \p$, with $\phi_n$ smooth, and if $\psi_n\to \psi$, with $\psi_n$  $\phi_n$-regular ``uniformly'' with respect to $n$ (see the statement of Theorem~\ref{th:exist} below for the precise meaning), then
the corresponding viscosity (and thus distributional) level set solutions $u_n$ converge locally uniformly to the unique distributional level set flow with anisotropy $\p$ and  ($\p$-regular) mobility $\psi$. This leads to a new proof of the existence of distributional level set solutions for $\p$-regular mobilities, without using the ATW scheme as in \cite{CMNP17}.

In Sections~\ref{sec:forcing}~and~\ref{sec:mobilities} we establish the crucial stability estimates of the flow with respect to changing $\p$-regular mobilities. This is achieved once again by exploiting the viscosity formulation in order to prove first the estimates in the case of smooth anisotropies and  to conclude by approximation.

Finally, in Section~\ref{sec:ex} we prove the main existence and uniqueness result for the level set formulation of \eqref{oee}, in the case of general anisotropies and mobilities. 
In this last step we proceed essentially as in \cite{CMNP17}: we approximate any mobility $\psi$ by a sequence $\p$-regular mobilities $\psi_n$ and  show, by means of the stability estimates of the previous sections, that the corresponding solutions admit a unique limit. 

\section*{Acknowledgements}
A. Chambolle was partially supported by the ANR projects ANR-12-BS01-0014-01 ``GEOMETRYA" and ANR-12-BS01-0008-01 ``HJnet". M. Novaga was partially supported by 
by the GNAMPA and by the University of Pisa via grant PRA 2017
``Problemi di ottimizzazione e di evoluzione in ambito variazionale".
M. Morini and M. Ponsiglione were partially supported by the GNAMPA 2016 grant ``Variational methods for nonlocal geometric flows".

\section{Distributional mean curvature flows}

Given a norm $\eta$ on $\R^N$
(a convex, even, one-homogeneous real-valued function with $\eta(\nu)>0$
if $\nu\neq 0$), we define a polar norm $\eto$ by
$\eto(\xi):=\sup_{\p(\nu)\le 1}\nu\cdot\xi$ 
and an associated anisotropic perimeter $P_\eta$ as
\[
P_\eta(E):=\sup\biggl\{\int_E\Div \zeta\, dx: \zeta\in C^1_c(\R^N; \R^N),\, \eto(\zeta)\leq 1\biggr\}\,.
\]
As is well known, $(\eto)^\circ=\eta$ so that when the set $E$ is smooth enough
one has
\[
P_\eta(E) = \int_{\partial E}\eta(\nu^E)d\H^{N-1}\,,
\]
which is the perimeter of $E$ weighted by the surface tension $\eta(\nu)$.

We will make repeated use of the following identities 
\beq\label{subp}
\partial\eta(\nu) =\{\xi\,:\,\eto(\xi)\le1\textup{ and }\xi\cdot \nu \ge \eta(\nu)\}= \{\xi\,:\,\eto(\xi)=1\textup{ and }\xi\cdot \nu=\eta(\nu)\},
\eeq
(and the symmetric statement for $\eto$) for $\nu\neq 0$.
Moreover,  $\partial\eta(0)=\{\xi\,:\,\eto(\xi)\le 1\}$ while $\partial\eto(0)=\{\xi\,:\,\eta(\xi)\le 1\}$.
For $R>0$ we denote
$$
W^{\eta}(x,R):=\{y\,:\,\eto(y-x)\le R\}\,.
$$ 
Such a set is called the {\em Wulff shape} (of radius $R$ and center $x$) associated with the norm $\eta$ and represents
the unique (up to translations) solution of the anisotropic isoperimetric problem 
$$
\min\left\{P_\eta(E)\,:\,  |E|=|W^{\eta}(0,R)|\right\},
$$
see for instance~\cite{FonsecaMuller}.


We denote by $\dist^\eta(\cdot, E)$ the  distance from $E$ induced by the norm $\eta$, that is, for any $x\in \R^N$
\begin{equation}\label{polardist}
\dist^\eta(x, E):=\inf_{y\in E}\eta(x-y)
\end{equation}
if $E\neq\emptyset$ and  $\dist^\eta(x,\emptyset):= + \infty$.  
Moreover,  we denote by $\dd^\eta_E$ the signed distance from $E$ induced by $\eta$, i.e., 
\[
\dd^\eta_E(x):= \dist^\eta(x,E) - \dist^\eta(x,E^c)\,.
\]
so that $\dist^\eta(x,E)=\dd^\eta_E(x)^+$ and $\dist^\eta(x,E^c)=\dd^\eta_E(x)^-$, where we adopted  the  standard notation  $t^+:=t\lor  0$ and $t^-:=(-t)^+$).
Note that by \eqref{subp} we have $\eta(\nabla d^{\eto}_{E})=\eto(\nabla d^{\eta}_{E})=1$ a.e.~in $\R^N\setminus \partial E$.

Finally we recall that a sequence of closed sets $E_n$ in $\R^m$ converges to a closed set $E$ in the {\em Kuratowski sense}
if the following conditions are satisfied
\begin{itemize}
\item[(i)] if $x_n\in E_n$, any limit point of $\{x_n\}$ belongs to $E$;
\item[(ii)] any $x\in E$ is the limit of a sequence $\{x_n\}$, with $x_n\in E_n$.
\end{itemize}
 and we write
$$
E_n\stackrel{\mathcal K}{\longrightarrow} E\,.
$$
Since $E_n\stackrel{\mathcal K}{\longrightarrow} E$ if and only if
(for any norm $\eta$) $\dist^{\eta}(\cdot, E_n)\to \dist^{\eta}(\cdot, E)$ locally uniformly in $\R^m$, by Ascoli-Arzel\`a's Theorem any sequence of closed sets admits a  converging subsequence  in the Kuratowski sense.

\subsection{The weak formulation  of the crystalline flow}\label{stass}

In this section we recall  the weak   formulation of the crystalline mean curvature flow introduced in \cite{CMP4, CMNP17}.


In what follows, we will consider  forcing terms $g:\R^N\times [0, +\infty)\to \R$ satisfying the following two hypotheses:
\begin{itemize}
\item[H1)] for every $T>0$, $g\in L^{\infty}(\R^N\times (0, +\infty))$;
\item[H2)] there exists $L>0$ such that $g(\cdot, t)$ is L-Lipschitz continuous with respect to the metric $\pso$ for a.e. $t>0$. Here $\psi$ is the norm representing the mobility in \eqref{oee}.
\end{itemize}
\begin{remark}
Assumption H1) can be in fact weakened and replaced by 
\begin{itemize}
\item[H1)'] for every $T>0$, $g\in L^{\infty}(\R^N\times (0, T))$.
\end{itemize}
 Indeed under the weaker assumption H1)', all the arguments and the estimates presented throughout the paper continue to work in any  time interval $(0,T)$, with some of the constants involved possibly depending on $T$. {In the 
same way, if one restricts our study to the evolution of sets with
compact boundary, then one could assume that $g$ is only locally
bounded in space.}
We assume H1) instead of H1)' only to simplify the presentation. 
\end{remark}

Let   $\p$, $\psi$ be two (possibly crystalline) norms representing the anisotropy and the mobility in \eqref{oee}, respectively. We recall the following distributional formulation of \eqref{oee}.
\begin{definition}[See \cite{CMNP17}]\label{Defsol}
Let $E^0\subset\R^N$ be 
a closed set. 
Let $E$ be a closed set in $\R^N\times [0,+\infty)$ and
for each $t\geq 0$ denote $E(t):=\{x\in \R^N\,:\, (x,t)\in E\}$. We
say that $E$ is a {\em superflow} of \eqref{oee} with
initial datum $E^0$ if
\begin{itemize}
\item[(a)] {\sc Initial Condition:} $E(0)\subseteq {E}^0$;
\item[(b)] {\sc Left Continuity:}
$E(s) \stackrel{\mathcal K}{\longrightarrow} E(t)$ as $s\nearrow t$ for all $t>0$;
\item[(c)] {\sc Extinction Time:} 
If for  $t\ge 0$, ${E}(t)=\emptyset$, then $E(s)=\emptyset$ for all $s > t$;
\item[(d)] {\sc Differential Inequality:} 
Set $T^*:=\inf\{t>0\,:\, E(s)=\emptyset \text{ for $s\geq t$}\}$, 
and 
$$
d(x,t):=\dist^{\pso}(x, E(t)) \qquad \text{ for all } (x,t)\in \R^N\times (0,T^*)\setminus E.
$$
Then there exists $M>0$ such that the inequality
\begin{equation}\label{eq:supersol}
 \partial_t d \ge \Div z+g- Md
\end{equation}
holds in the distributional sense in $\R^N\times (0,T^*)\setminus E$
for a suitable $z\in L^\infty(\R^N\times (0,T^*))$ such that
$z\in \partial\p(\nabla d)$~a.e., $\Div z$ is a Radon measure in $\R^N\times (0,T^*)\setminus E$, and 
 $(\Div z)^+\in L^\infty(\{(x,t)\in\R^N\times (0,T^*):\, d(x,t)\geq\delta\})$ for every $\delta\in (0,1)$.
\end{itemize}

We say that $A$, open set in $\R^N\times [0,+\infty)$, is
a subflow of \eqref{oee} with initial datum $E^0$ if $A^c$ is a superflow of \eqref{oee} with $g$ replaced by $-g$ and with initial datum $(\mathring{E}^0)^c$.

Finally, we say that $E$, closed set in $\R^N\times [0,+\infty)$,  is a solution of \eqref{oee} with initial datum $E^0$ if it is a superflow and if $\mathring{E}$ is a subflow, both with initial datum $E^0$.
\end{definition}

It is shown in~\cite{CMNP17} (also~\cite{CMP4} for a simpler equation),
using quite standard parabolic comparison arguments,
that such evolutions satisfy a comparison principle:
\begin{theorem}[{\cite[Thm~2.7]{CMNP17}}]\label{thm:uniq}
Let $E$ be a superflow  with initial datum $E^0$ and
$F$ be a subflow with initial datum $F^0$ in the sense of Definition~\ref{Defsol}. Assume that
$\dist^{\pso}(E^0,{F^0}^c)=:\Delta>0$. 
Then, 
$$\dist^{\pso}(E(t),F^c(t))\ge \Delta \mathrm{e}^{-Mt} \qquad \text{ for all } t\ge 0,$$
where $M>0$ is as in 
\eqref{eq:supersol} for both $E$ and $F$.
\end{theorem}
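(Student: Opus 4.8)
The plan is to establish the comparison principle by a parabolic-type argument applied to the (positive part of the) difference of the two signed/unsigned distance functions associated to $E$ and $F$. Let me outline the strategy.

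\medskip

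\textbf{Setup and reduction.} First I would fix $T>0$ arbitrary and work on the time interval $(0,T)$; since the claimed estimate $\dist^{\pso}(E(t),F^c(t))\ge \Delta e^{-Mt}$ is local in time, it suffices to prove it on each such interval. Set $d_E(x,t):=\dist^{\pso}(x,E(t))$ and $d_F(x,t):=\dist^{\pso}(x,F(t)^c)$; note that since $A=\mathring E{}^c$-type objects are involved, $d_F$ measures the distance to the complement of the subflow, and by definition of subflow $F^c$ is a superflow for the flow with $g$ replaced by $-g$, so $d_F$ satisfies a differential inequality of the form $\partial_t d_F \le \Div z_F + g + M d_F$ in the region where it is positive (after unwinding the definition of subflow, the sign of the forcing and of the curvature term flip appropriately). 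The key quantity to control is $w(t):=\inf_x\big(d_E(x,t)+d_F(x,t)\big)$, or rather a localized doubling-of-variables version of it; the goal is $w(t)\ge \Delta e^{-Mt}$ given $w(0)\ge\Delta$.

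\medskip

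\textbf{Doubling of variables and the viscosity/distributional inequality.} The main technical device is a doubling-of-variables argument in the spirit of the uniqueness proofs for viscosity and for the distance formulation (Soner, Barles--Soner--Souganidis, and the parabolic version used in \cite{CMP4, CMNP17}). For $\e>0$ consider the function $(x,y,t)\mapsto d_E(x,t)+d_F(y,t)+\frac{1}{2\e}\pso(x-y)^2$ (or $\frac{1}{\e}$ times a suitable penalization adapted to the norm $\pso$), look at its infimum over $x,y$, and track its time derivative. At an (approximate) minimizing pair $(x_\e,y_\e)$, the spatial subdifferential conditions force $\nabla d_E(x_\e,\cdot)$ and $-\nabla d_F(y_\e,\cdot)$ to point in nearly the same direction, hence the selections $z_E\in\partial\p(\nabla d_E)$ and $z_F\in\partial\p(\nabla d_F)$ can be compared: $z_E(x_\e)-z_F(y_\e)$ contracted against a suitable test object controls the difference of the divergence terms, and crucially the $\Div z$ terms—one of which enters with a favorable sign on the superflow side and the other with a favorable sign on the subflow side—combine so that $\Div z_E - \Div z_F$ does not obstruct the estimate. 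The forcing terms $g$ and $-g$ on the two sides also cancel up to the Lipschitz error $L\pso(x_\e-y_\e)$, which is $O(\sqrt\e)$ and vanishes in the limit. One is then left with a differential inequality $\frac{d}{dt}w_\e(t)\ge -M w_\e(t) - o_\e(1)$ in the sense of distributions (or viscosity) on $(0,T)$, and Gronwall's lemma gives $w_\e(t)\ge (w_\e(0)-o_\e(1))e^{-Mt}$; letting $\e\to 0$ and using $d_E(x,0)+d_F(x,0)\ge\Delta$ for all $x$ (which is exactly the hypothesis $\dist^{\pso}(E^0,{F^0}^c)=\Delta$ together with the initial conditions $E(0)\subseteq E^0$, $F(0)\supseteq\mathring F{}^0$... to be handled carefully) yields $w(t)\ge\Delta e^{-Mt}$, which is the assertion.

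\medskip

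\textbf{Handling the singular set and boundary terms.} There are two points requiring care. First, the differential inequalities \eqref{eq:supersol} hold only away from $E$ (resp. $F^c$), i.e. where the distances are positive, and the divergence term $(\Div z)^+$ is only bounded on $\{d\ge\delta\}$; so one must localize the doubling argument to the open region where $d_E+d_F$ is close to its infimum and bounded below by a positive constant, and argue that the infimum cannot be attained on the ``bad'' set, or else the conclusion is trivially true there. This is a standard but delicate maximum-principle-on-a-good-region argument; the left-continuity condition (b) and the extinction-time condition (c) are what guarantee that $w(t)$ does not jump down and that once a flow is empty the distance is $+\infty$, so the infimum is controlled by the surviving flow. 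Second, one must make sense of $\partial_t w_\e$ distributionally given that $\partial_t d_E$ and $\partial_t d_F$ are only distributions with the stated integrability; this is done exactly as in \cite{CMNP17} by testing the inequalities against nonnegative test functions concentrating near the minimizing locus and passing to the limit.

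\medskip

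\textbf{Main obstacle.} I expect the principal difficulty to be the rigorous treatment of the $\Div z$ terms in the doubling argument: because $\p$ (and hence $\pso$) may be crystalline, $z_E$ and $z_F$ are genuine $L^\infty$ vector fields whose divergences are only measures, and one cannot simply evaluate them at points $x_\e,y_\e$. The resolution—already the heart of the argument in \cite{CMP4, CMNP17}—is that the distributional formulation is precisely engineered so that, after integrating against the penalization kernel, the relevant combination of divergences has a sign; reproducing this requires carefully choosing the penalization to be compatible with $\pso$ and exploiting the one-sided $L^\infty$ bound on $(\Div z)^+$ to control the remaining term. Since the statement we are asked to prove is quoted from \cite[Thm~2.7]{CMNP17}, the proof here would, in practice, simply cite that reference; but the above is the route one would follow to reconstruct it.
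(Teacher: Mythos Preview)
The paper does not prove this theorem at all; it is simply quoted from \cite[Thm~2.7]{CMNP17} with the remark that the proof uses ``quite standard parabolic comparison arguments.'' Your proposal correctly recognizes this and sketches the doubling-of-variables parabolic comparison on the sum of distance functions that is indeed the argument in the cited reference, so your treatment is consistent with---and in fact more detailed than---what the paper itself provides.
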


We now recall the corresponding notion of sub- and supersolution to the level set flow associated with \eqref{oee}, see again \cite{CMNP17}.

 \begin{definition}[Level set subsolutions and supersolutions]\label{deflevelset1}
 Let  $u^0$ be a  uniformly continuous function on $\R^N$. We will say that a  lower semicontinuous   function $u:\R^N\times [0, +\infty)\to \R$ is a {\em level set supersolution} corresponding to \eqref{oee}, with initial datum $u^0$,  if $u(\cdot, 0)\geq u^0$ and if for a.e. $\lambda \in \R$ the closed sublevel set $\{(x,t)\,:\, u(x,t)\leq \lambda\}$  is a superflow {of}  \eqref{oee} in the sense of Definition~\ref{Defsol}, with initial datum $\{u_0\leq \lambda\}$.
 
 We will say that an  upper-semicontinuous   function $u:\R^N\times [0, +\infty)\to \R$ is a {\em level set subsolution} corresponding to \eqref{oee}, with initial datum $u^0$, if~$-u$ is a superlevel set flow  in the previous sense, with initial datum $-u_0$ and with $g$ replaced by~$-g$.
 
 Finally, we will  say that a continuous   function $u:\R^N\times [0, +\infty)\to \R$ is a {\em  solution} to the level set flow corresponding to \eqref{oee} if it is both a level set subsolution and  supersolution. 
 \end{definition}

As shown in \cite{CMNP17}, Theorem~\ref{thm:uniq} easily yields that almost all closed sublevels of a solution of the level set flows are solutions of \eqref{oee} in the sense of Definition \eqref{Defsol}. Moreover,  the following comparison principle  between level set subsolutions and supersolutions holds true.

\begin{theorem}[{\cite[Thm~2.8]{CMNP17}}]\label{th:lscomp}
Let $u^0$, $v^0$ be uniformly continuous functions on $\R^N$  and let $u$, $v$ be respectively a level set subsolution with initial datum $u^0$ and a level set supersolution with initial datum $v^0$, in the sense of Definition~\ref{deflevelset1}. If $u^0\leq v^0$, then $u\leq v$.
\end{theorem}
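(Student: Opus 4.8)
The plan is to deduce the comparison principle from the one for sub‑ and superflows, Theorem~\ref{thm:uniq}, via an ``almost‑every sublevel'' slicing argument. I would argue by contradiction: assume $u(\bar x,\bar t)>v(\bar x,\bar t)$ at some $(\bar x,\bar t)\in\R^N\times[0,+\infty)$, so that $I:=(v(\bar x,\bar t),u(\bar x,\bar t))$ is a nonempty open interval. By Definition~\ref{deflevelset1}, the set of $\lambda$ for which the closed set $\{v\le\lambda\}:=\{(x,t):v(x,t)\le\lambda\}$ is a superflow of \eqref{oee} with initial datum $\{v^0\le\lambda\}$ has full measure; applying the same definition to the level set supersolution $-u$ (with $g$ replaced by $-g$), the set of $\mu$ for which $\{u\ge\mu\}$ is a superflow of \eqref{oee}, with $g$ replaced by $-g$ and with initial datum $\{u^0\ge\mu\}$, also has full measure. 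I would then pick two levels $\lambda_1<\lambda_2$ in $I$, with $\lambda_1$ in the first full‑measure set and $\lambda_2$ in the second.

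Next, set $E:=\{v\le\lambda_1\}$ and $F:=\{u<\lambda_2\}$. Then $E$ is a superflow of \eqref{oee} with initial datum $E^0:=\{v^0\le\lambda_1\}$, and since its complement $F^c=\{u\ge\lambda_2\}$ is a superflow of \eqref{oee} with $g$ replaced by $-g$ and initial datum $\{u^0\ge\lambda_2\}$, the open set $F$ is a subflow of \eqref{oee} with initial datum $F^0:=\{u^0<\lambda_2\}$ in the sense of Definition~\ref{Defsol}. The key point to check is that the two initial data are strictly separated: since $u^0\le v^0$ one has $(F^0)^c=\{u^0\ge\lambda_2\}\subseteq\{v^0\ge\lambda_2\}$, and because $\lambda_1<\lambda_2$ and $v^0$ is uniformly continuous, any $x\in E^0$ and any $y\in(F^0)^c$ satisfy $v^0(y)-v^0(x)\ge\lambda_2-\lambda_1>0$, hence $\pso(x-y)$ is bounded below by a quantity determined by the modulus of continuity of $v^0$. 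Thus $\Delta:=\dist^{\pso}(E^0,(F^0)^c)>0$.

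Finally I would apply Theorem~\ref{thm:uniq} to $E$ and $F$: after replacing the constants in \eqref{eq:supersol} for $E$ and for $F^c$ by their maximum — admissible, since the relevant distance functions are nonnegative, so enlarging the constant preserves \eqref{eq:supersol} — there is a common $M>0$, and the theorem yields $\dist^{\pso}(E(t),F^c(t))\ge\Delta\,\mathrm e^{-Mt}>0$ for every $t\ge0$; in particular $E(\bar t)\cap F^c(\bar t)=\emptyset$. But $v(\bar x,\bar t)<\lambda_1$ and $u(\bar x,\bar t)>\lambda_2$ say exactly that $\bar x\in\{v(\cdot,\bar t)\le\lambda_1\}=E(\bar t)$ and $\bar x\in\{u(\cdot,\bar t)\ge\lambda_2\}=F^c(\bar t)$, a contradiction; hence $u\le v$. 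The only genuine point in this scheme is the strict separation of the initial data in the middle step, which is precisely where the uniform continuity of the initial data and the insertion of \emph{two} distinct slicing levels $\lambda_1<\lambda_2$ are used (a single level would not suffice, since $u^0\le v^0$ is only a weak ordering); everything substantive — the parabolic comparison estimate producing the exponential lower bound on the $\pso$-distance between a superflow and a subflow — is already contained in Theorem~\ref{thm:uniq} and would not be reproved here.
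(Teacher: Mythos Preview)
Your argument is correct and is exactly the intended one: the paper does not give a proof but simply cites \cite{CMNP17} and remarks that the result follows from Theorem~\ref{thm:uniq}, and your slicing-by-two-levels reduction to that geometric comparison is precisely how that derivation goes. The only small point worth making explicit is that at $t=0$ one already has $u(\cdot,0)\le u^0\le v^0\le v(\cdot,0)$ by the initial conditions in Definition~\ref{deflevelset1}, so the contradiction point $(\bar x,\bar t)$ necessarily has $\bar t>0$; everything else in your write-up is in order.
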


For smooth anisotropies, solutions to the level set flow and (minus the characteristic function of) solutions of the geometric flow in the sence of Definition \ref{Defsol} are in fact viscosity solutions of the (degenerate) parabolic equation \eqref{eq:viscoflow} below. This classical fact will be shown and exploited  to some extent to non smooth anisotropies in the next two sections.

\subsection{Viscosity solutions}\label{sec:vs}
We show here that in the smooth cases, the notion of solution
in Definition~\ref{Defsol}
coincides with the definition of standard viscosity solutions for
geometric motions, as for instance
in~\cite{BarlSouga98}. This property will be  helpful to establish estimates using standard approaches
for viscosity solutions.

\begin{lemma}\label{lem:visco}
Assume that $\p,\psi,\pso\in C^2(\R^N\setminus\{0\})$, and that $g$ is continuous.
Let $E$ be a superflow in the sense of Definition~\ref{Defsol}.
Then, $-\chi_E$ is a viscosity supersolution of
\begin{equation}\label{eq:viscoflow}
u_t = \psi(\nabla u)\big( \Div\nabla \p(\nabla u)+g\big).
\end{equation}
in $\R^N\times (0,T^*]$ where $T^*$ is the possible extinction time of $E$.

Conversely, a viscosity supersolution
$-\chi_{E(t)}$ of~\eqref{eq:viscoflow}
defines a superflow in the sense of Definition~\ref{Defsol}.
\end{lemma}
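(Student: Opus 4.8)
The plan is to establish the equivalence by relating the distributional differential inequality \eqref{eq:supersol} for the signed distance $d(x,t)=\dist^{\pso}(x,E(t))$ to the viscosity supersolution property of $-\chi_E$ at a point $(x_0,t_0)$ where a smooth test function touches $-\chi_E$ from below. The key observation is that, since $\pso\in C^2(\R^N\setminus\{0\})$, the signed distance $d$ is, near a regular boundary point, as smooth as $\partial E(t)$ and satisfies $\pso(\nabla d)=1$ there; moreover for a smooth anisotropy the operator $\Div\,\nabla\p(\nabla d)$ is precisely the classical anisotropic mean curvature $\kappa^{E(t)}_\p$, and on the set $\{d>0\}$ one has $\Div z=\Div\nabla\p(\nabla d)$ because $\partial\p(\nabla d)=\{\nabla\p(\nabla d)\}$ is a singleton when $\p$ is differentiable. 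So the first step is to record these identifications and to note that $\partial_t d$ along the level sets encodes the normal velocity $V$ via $V=-\partial_t d$ (with the sign convention built into $d$ being the distance to $E(t)$, positive outside).

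\smallskip

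For the direct implication, I would take a smooth $\varphi$ touching $-\chi_E$ from below at $(x_0,t_0)$. If $x_0\in\mathring{E(t_0)}$ or $x_0\notin E(t_0)$ the geometric/degenerate-parabolic structure makes the inequality trivial (one uses that $-\chi_E$ is locally constant there, handling the degeneracy $\psi(\nabla\varphi)=0$ case and the constancy case separately, as is standard for geometric equations — cf.~\cite{BarlSouga98}). The substantial case is $x_0\in\partial E(t_0)$, where one must show $\varphi_t(x_0,t_0)\le \psi(\nabla\varphi)\big(\Div\nabla\p(\nabla\varphi)+g\big)(x_0,t_0)$, and one may assume $\nabla\varphi(x_0,t_0)\neq 0$ since the degenerate case is again handled by the geometric nature of the equation. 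Here the idea is to convert the pointwise touching into a one-sided bound on the distributional inequality \eqref{eq:supersol}: because $\varphi$ lies below $-\chi_E$, its sublevel sets are contained in $E(t)$ near $(x_0,t_0)$, so comparing the smooth function $\varphi$ with the distance $d$ (using the relation between the two through the level-set structure and $\pso(\nabla d)=1$), one tests \eqref{eq:supersol} against suitable nonnegative mollifiers concentrating at $(x_0,t_0)$ and passes to the limit, using the $L^\infty$ bound on $(\Div z)^+$ away from $\partial E$ to control the measure $\Div z$ and the left-continuity (b) plus the extinction condition (c) to legitimize evaluating the limit at the touching point. The term $-Md$ disappears in the limit since $d\to 0$ at a boundary point. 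The converse implication runs the same correspondence backwards: given that $-\chi_E$ is a viscosity supersolution, one shows that $d$ satisfies \eqref{eq:supersol} by a standard argument showing the distance function to the zero-superlevel set of a geometric supersolution is itself a (viscosity, hence distributional, given the smoothness of $\pso$ and $\p$) supersolution of the corresponding distance equation $\partial_t d\ge \Div\nabla\p(\nabla d)+g$, with the $-Md$ term providing the needed slack; the conditions (a), (b), (c) follow from the regularity and the comparison structure of viscosity solutions.

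\smallskip

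The main obstacle I anticipate is the rigorous passage between the \emph{distributional} inequality on $d$, which lives on the open set $\R^N\times(0,T^*)\setminus E$ and involves a measure $\Div z$ that is only controlled away from $\partial E$, and the \emph{pointwise} viscosity condition at a boundary point $(x_0,t_0)\in\partial E(t_0)$ — precisely where the control on $\Div z$ degenerates. Bridging this requires the boundary point to be approached from the exterior with uniform estimates, which is exactly why the extra dissipative term $-Md$ and the one-sided bound $(\Div z)^+\in L^\infty$ on $\{d\ge\delta\}$ are built into Definition~\ref{Defsol}; making this limit argument precise, and checking that the degenerate cases $\nabla\varphi=0$ or $x_0$ in the open complement genuinely reduce to the standard geometric-PDE conventions, is where the real work lies. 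I would also be careful that the full statement quoted here (that $-\chi_{E(t)}$ being a viscosity supersolution \emph{defines} a superflow) implicitly requires one to reconstruct a valid vector field $z\in\partial\p(\nabla d)$ with $\Div z$ a Radon measure — for smooth $\p$ this is forced, $z=\nabla\p(\nabla d)$, so the only issue is the regularity/measure property of $\Div z$, which follows from the semiconcavity-type bounds on $d$ that are standard for geometric flows with smooth, elliptic anisotropies.
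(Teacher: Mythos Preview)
Your plan inverts the emphasis of the paper: the direct implication (superflow $\Rightarrow$ viscosity supersolution) is dismissed there as already proved in~\cite{CMNP17}, and all the work goes into the converse. Your treatment of the converse is where the real gap lies. Saying it ``runs the same correspondence backwards'' and that $d$ is ``a (viscosity, hence distributional\dots) supersolution'' by a ``standard argument'' skips three non-obvious steps that the paper carries out explicitly. First, properties (b) and (c) of Definition~\ref{Defsol} are not automatic from the viscosity supersolution property of $-\chi_E$; the paper obtains them by comparison with explicit shrinking Wulff shapes $W^\psi(x,R-Cs)$, which also gives the lower bound on $\partial_t d$ needed later. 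Second, showing that $d(x,t)=\dist^{\pso}(x,E(t))$ is itself a viscosity supersolution of the modified equation (with the extra $-Lu$ term) requires a translation argument: if $\eta$ touches $d$ from below at $(\bar x,\bar t)$ with $d(\bar x,\bar t)=R>0$, one builds $\eta'(y,t)=\eta(y+\bar x-\bar y,t)-R$ touching $-\chi_E$ at the nearest boundary point $\bar y$, and then uses the $L$-Lipschitz property of $g$ to absorb the shift --- this is where the $-Md$ term in~\eqref{eq:supersol} actually originates, not merely ``slack''. Third, and most substantially, the passage from ``$d$ is a viscosity supersolution'' to ``$d$ satisfies the \emph{distributional} inequality~\eqref{eq:supersol}'' is not immediate: the paper regularises in time via $d_n(x,t)=\min_s(d(x,t-s)+ns^2)$, which is semiconcave in space and time, hence has second-order jets a.e., so the viscosity inequality holds a.e.\ and can be integrated; one then passes to the limit using that $\nabla d_n\to\nabla d$ a.e.\ at continuity times of $d$. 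Your remark about semiconcavity of $d$ is on the right track but does not by itself give enough regularity in time to integrate a viscosity inequality.

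Two smaller points. In your direct implication you wrote the supersolution inequality as $\varphi_t\le\psi(\nabla\varphi)(\dots)$; for a test function touching from below this should be $\ge$ (compare~\eqref{eq:superviscodist}). And your proposed mechanism for the direct implication --- testing~\eqref{eq:supersol} against mollifiers concentrating at a boundary point --- faces exactly the difficulty you flag (the estimate on $(\Div z)^+$ degenerates as $d\to 0$); the paper's route via~\cite{CMNP17} avoids this by working instead with the distance function and a barrier argument, not by a direct limit of the distributional inequality at $\partial E$.
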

\begin{proof} A similar statement (in a simpler context) is proved
in~\cite[Appendix]{CMP4}, while it is proved in~\cite{CMNP17}
that a superflow defines a viscosity supersolution. We therefore
here focus on the converse: 
Given an evolving set $E(t)$ such
that $-\chi_E$ is a viscosity supersolution of~\eqref{eq:viscoflow},
we show that $E(t)$ is a superflow in the sense of
Definition~\ref{Defsol} (with  the constant $M$ in~\eqref{eq:supersol}
equal to the Lipschitz constant $L$ of $g(\cdot,t)$ appearing in the assumption H2)).

\noindent\textit{Step 1: Left Continuity and Extinction Time.}
Let $T^*\in [0,+\infty]$ be the (first) extinction time of $E$, and assume without loss of generality $T^*>0$.
Let $d(x,t):=\dist^{\pso}(x,E(t))$. We fix $\delta>0$ and
we set  $A=(\R^N\times [0,T^*))\setminus E$ and $A^\delta=A\cap\{d>\delta\}$.
Let $(x,t)$ with $d(x,t)=R>0$. Then $W^\psi(x,R-\e)\cap E(t)=\emptyset$
 for any $\e>0$ (small). There exists a constant $C$ (depending on $\phi,\psi$)
 such that, letting $W(s)=\R^N\setminus W^\psi(x,R-\e-(C/R+\|g\|_\infty)s)$,
 $-\chi_{W(s)}$ is a viscosity subsolution of~\eqref{eq:viscoflowdist},
 for $s\le R^2/(2(C+R\|g\|_\infty))$ and $\e\le R/4$.
 By standard comparison results~\cite{BaSoSou}, it follows that
 $E(t+s)\subset W(s)$ for such times $s$, so that
 $d(x,t+s)\ge R-\e-(C/R+\|g\|_\infty)s$. Hence, letting $\e\to 0$,
 we find that
\begin{equation}\label{sc1}
d(x,t+s)\ge d(x,t)-(C/\delta+\|g\|_\infty)s \qquad \text{ if } (x,t), \,(x,t+s)\in A^\delta.
\end{equation}
 In particular, it follows that $\partial_t d$
 is bounded from below in such sets and hence is a measure.
By \eqref{sc1} and the fact that $E$ is closed  we deduce that the left continuity (b) of Definition~\ref{Defsol}
 holds for $E(t)$.
 Moreover,  the same argument shows that if $t>T^*$ then $d(x,t)=+\infty$,
 showing also point~(c).

\noindent \textit{Step 2: The distance function is a viscosity supersolution.}
We now
show that the function $d(x,t)$ is
a viscosity supersolution of
\begin{equation}\label{eq:viscoflowdist}
u_t = \psi(\nabla u)\big( D^2\p(\nabla u):D^2u +g - L u\big).
\end{equation}
In fact, this is essentially classical~\cite{Soner93}, however
the proof in this reference needs to be adapted to deal with the
forcing term. An elementary proof is as follows: let $\eta$ be
a smooth test function and assume
$(\bar x,\bar t)$ is a contact point, where $\eta(\bar x,\bar t)=d(\bar x,
\bar t)$ and $\eta\le d$. If the common value of $\eta,d$ at $(\bar x,\bar t)$
is zero then it is also a contact point of $1-\chi_E$ and $\eta$,
so that
\begin{equation}\label{eq:superviscodist}
\partial_t\eta(\bar x,\bar t)\ge
\psi(\nabla \eta(\bar x,\bar t))\big(
D^2\p(\nabla\eta(\bar x,\bar t)):D^2\eta(\bar x,\bar t)+g(\bar x,\bar t)-
L\eta(\bar x,\bar t)\big)
\end{equation}
obviously holds, by definition (recalling \eqref{eq:viscoflow} and that $\eta(\bar x,\bar t)=0$). Hence we consider the case 
where $R=d(\bar x,\bar t)>0$. Let $\bar y\in\partial E(\bar t)$
such that $R=\pso(\bar x-\bar y)$.
We let
\[
\eta'(y,t):= \eta(y+\bar x-\bar y,t)-R
\le d(y+\bar x-\bar y,t)-R
\le d(y,t)
\]
since $d$ is $1$-Lipschitz in the $\pso$ norm.
In particular, in a neighborhood of $(\bar y,\bar t)$,
$\eta'(y,t)\le 1-\chi_{E(t)}(y)$.
On the other hand,
$\eta'(\bar y,\bar t)=0=d(\bar y,\bar t)=1-\chi_{E(\bar t)}(\bar y)$.
Hence, by \eqref{eq:viscoflow} 
\[
\partial_t\eta(\bar x,\bar t)=\partial_t\eta'(\bar y,\bar t)\ge
\psi(\nabla \eta'(\bar y,\bar t))\big(
D^2\p(\nabla\eta'(\bar y,\bar t)):D^2\eta'(\bar y,\bar t)+g(\bar y,\bar t)
\big).
\]
Since $g(\bar y,\bar t)\ge g(\bar x,\bar t)-L\eta(\bar x,\bar t)$,
\eqref{eq:superviscodist} follows.

\noindent \textit{Step 3: Differential inequality.}
A  classical remark is that $d^2$, as an infimum of
the uniformly semiconcave functions $\pso(\cdot -y)^2$, $y\in E(t)$,
is semiconcave, hence in $A^\delta$ one has $D^2d\le C/\delta I$ in the sense of measures
for some constant $C$ depending only on $\pso$.
In particular,
$\Div\nabla\p(\nabla d)=D^2\p(\nabla d):D^2d \le C/\delta$
in $A^\delta$ in the sense of measures.

We  proceed as in~\cite{CMP4}: for $n\ge 1$, let $d_n(x,t):=
\min_{s} (d(x,t-s)+n s^2)$ which is semiconcave and converges
to $d$ as $n\to\infty$. Moreover, one can easily check that
$d_n(\cdot,t)\to d(\cdot,t)$ locally uniformly if $t$ is a 
continuity point of $d$. 
Let $B\subset A^\delta$ be an open  ball,
(where in particular $d$ is bounded from below
by $\delta$ and from above), and observe that $d_n$ is still a supersolution
of~\eqref{eq:viscoflowdist}, provided $g(x,t)$ is replaced
with $g(x,t)-\omega_n)$ for some $\omega_n\to 0$ as $n\to +\infty$. 
Since $d_n$, which is semi-concave,
has a second-order jet almost everywhere in $B$,
equation~\eqref{eq:viscoflowdist} holds for $d_n$ almost everywhere in $B$.
Reasoning as in~\cite[Appendix]{CMP4}, we deduce that
\begin{equation}\label{eq:approxv}
\partial_t d_n \ge \psi(\nabla d_n)\big(
\Div z_n + g - \omega(C/n) - L d_n\big).
\end{equation}
in the distributional sense (or as measures) in $B$, where
$z_n:=\nabla\p(\nabla d_n)$. It remains to send $n\to\infty$:
clearly, $\partial_t d_n\to\partial_t d$ in the distributional
sense. Consider $(x,t)$ a point where $\nabla d(x,t)$,
and $\nabla d_n(x,t)$ exist for all $n$.
First, if $d(x,t-s)+ns^2$ attains the minimum in $s_n$,
one has for any $p\in \partial^+d(x,t-s_n)$ (the spatial
supergradient of the semiconcave function $d(\cdot,t-s_n)$) that
\begin{multline*}
d_n(x+h,t)\le d(x+h,t-s_n)+n s_n^2 
\\
\le d(x,t-s_n)+p\cdot h + \frac{C}{\delta}|h|^2 +n s_n^2 = d_n(x,t)+ p\cdot h+\frac{C}{\delta}|h|^2
\end{multline*}
showing that $p\in\partial^+d_n(x,t)=\{\nabla d_n(x,t)\}$.
We deduce that $d(\cdot,t-s_n)$ is differentiable at $x$,
with gradient $\nabla d_n(x,t)$, and in particular that
$\psi(\nabla d_n(x,t))=1$.

Assume now that in addition $d$ is continuous at $t$.
Then $d_n(\cdot,t)\to d(\cdot,t)$
uniformly in $B\cap (\R^N\times\{t\})$, and using the (uniform) semiconcavity of these
functions one also deduces that $\nabla d_n(x,t)\to\nabla d(x,t)$
a.e.: hence, $z_n(x,t)=\nabla\p(\nabla d_n(x,t))$
converges to $z(x,t)=\nabla\p(\nabla d(x,t))$ almost everywhere.
Hence we may send $n$ to $\infty$ in~\eqref{eq:approxv} to find
that 
\[
\partial_t d \ge \Div z + g - L d
\]
in the distributional sense in $B$, with $z=\nabla\p(\nabla d)$ a.e.

This shows the Lemma.
\end{proof}

\subsection{The level set formulation}\label{subsec:levelset}

Let $u^0\in BUC(\R^N)$ (a Bounded, Uniformly Continuous function).
Then, it is well known~\cite{CGG} that if $\p\in C^2(\R^N\setminus\{0\})$,
$\psi,g$ are continuous,
there exists a unique viscosity solution $u$ of~\eqref{eq:viscoflow} with initial datum $u^0$.
Moreover, for all $\lambda\in\R$,
$-\chi_{\{u<\lambda\}}$ is a viscosity supersolution
and $-\chi_{\{u\le\lambda\}}$ a viscosity subsolution of the same equation.
If in addition $\psi,\pso\in C^2(\R^N\setminus\{0\})$,
it follows from Lemma~\ref{lem:visco}  
that 
$E_\lambda(t):=\{u(\cdot,t)\le\lambda\}$
is a superflow in the sense of Definition~\ref{Defsol},
while $A_\lambda(t):=\{u(\cdot,t)<\lambda\}$ is a 
subflow\footnote{In case of ``fattening'', also $\overline{\{u<\lambda\}}$
is a superflow, and the interior of $\{u\le\lambda\}$ a subflow.}.

In what follows we will say that a given norm $\eta$ is {\em smooth and elliptic} if both $\eta$ and $\eta^{\circ}$ belong to $C^2(\R^N\setminus\{0\})$.

We now consider sequences $\p_n,\psi_n$   of smooth and elliptic
anisotropies/mobilities converging
to $\p,\psi$. We also consider $g_n(x,t)$ a smooth forcing term,
 which converges
to $g(x,t)$ weakly-$*$ in $L^\infty(\R^N\times [0,+\infty))$. We assume also that $g_n$ is uniformly spatially Lipschitz continuous and we denote by  $L$, $M$ the (uniform) Lispchitz constants of $g_n$ with respect to $\pso_n$ and $\po_n$, respectively.
Given $u_n$ the corresponding unique viscosity solution of~\eqref{eq:viscoflow}
(with $\psi_n,\p_n,g_n$ instead of $\psi,\p,g$) with initial datum $u^0$, we want to study
the possible limits of $u_n$.
If the limiting anisotropies and forcing term are still smooth enough,
it is well known that the limiting $u$ is the unique viscosity solution of the
corresponding limit problem. If not, we will show that the limit
is still unique. 
We recall (see \cite{CMNP17}) the following 
\begin{definition}\label{def:phiregular}
We will say that a norm $\psi$ is $\p$-regular if  the associated Wulff shape $W^\psi(0,1)$ satisfies a uniform interior $\p$-Wulff shape condition, that is, if  there exists $\e_0>0$ with the following property:  for every $x\in \partial W^\psi(0,1)$ there exists $y\in W^\psi(0,1)$ such that  $W^\p(y, \e_0)\subseteq W^\psi(0,1)$ and $x\in \partial W^\p(y,\e_0)$.
 \end{definition}
{Notice that it is equivalent to saying that $W^\psi(0,1)$ is the
sum of a convex set and  $W^\p(0,\e_0)$, or equivalently that
$\psi(\nu)=\psi_0(\nu)+\e_0\p(\nu)$ for some convex function $\psi_0$.}


We now show the following result.

\begin{theorem}\label{th:exist}
Let $(\psi_n)_n$, $(\p_n)_n$ and $(g_n)_n$ as above, and, in addition,  assume that the mobilities $(\psi_n)_n$ are
uniformly $\p_n$-regular, meaning that  $\e_0>0$ in the Defintion~\ref{def:phiregular} does not depend on $n$. 
Let $u_n$ be the level set solutions to \eqref{oee} in the sense of Definition~\ref{deflevelset1},  with initial datum $u^0$, anisotropy $(\psi_n)_n$, mobility $(\p_n)_n$ and forcing term $(g_n)_n$.
  Then,   $u_n$ converge locally uniformly to  the unique level set solution $u$ to \eqref{oee} in the sense of Definition~\ref{deflevelset1},  with initial datum $u^0$, anisotropy $\psi$, mobility $\p$ and forcing term $g$.
\end{theorem}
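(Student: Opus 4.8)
The plan is to combine a compactness argument for the family $\{u_n\}$ with a passage to the limit in the superflow conditions of Definition~\ref{Defsol}; the uniqueness of the limit will then follow from the comparison Theorem~\ref{th:lscomp} (valid for arbitrary norms), so it suffices to exhibit \emph{one} level set solution of \eqref{oee} with datum $u^0$ for the limiting data (anisotropy $\p$, mobility $\psi$, forcing $g$) as a subsequential locally uniform limit of the $u_n$. I would first record that the limiting mobility is still $\p$-regular with the same constant: writing $\psi_n=\psi_{0,n}+\e_0\p_n$ with $\psi_{0,n}$ convex, as in the remark following Definition~\ref{def:phiregular}, and letting $n\to\infty$, one gets $\psi=\psi_0+\e_0\p$. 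This is what makes the estimates below uniform in $n$. For the compactness I would use the geometricity of \eqref{eq:viscoflow} and its invariance under spatial translations and under addition of constants to $u$, together with barriers built from the Wulff shapes $W^{\psi_n}$: under the $\p_n$-flow these shrink with a speed bounded uniformly in $n$, since by the uniform $\p_n$-regularity the $\p_n$-curvature of $\partial W^{\psi_n}(\cdot,R)$ is at most $C/(\e_0R)$ --- precisely the mechanism already used in Step~1 of the proof of Lemma~\ref{lem:visco}. One thus gets that $\{u_n\}$ is locally equibounded and equicontinuous, with moduli depending only on that of $u^0$, on $\sup_n\|g_n\|_\infty$, on $L$ and on $\e_0$, so that a subsequence converges locally uniformly to a continuous $u$ with $u(\cdot,0)=u^0$.

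Next I would show this $u$ is a level set solution for $\p,\psi,g$. By the symmetry $u\mapsto-u$, $g\mapsto-g$ of Definition~\ref{deflevelset1} it suffices to check that $u$ is a level set supersolution, i.e.\ that for a.e.\ $\lambda$ the closed set $E_\lambda:=\{u\le\lambda\}$ is a superflow with datum $\{u^0\le\lambda\}$. Fixing a level $\lambda$ for which $E^n_\lambda(t):=\{u_n(\cdot,t)\le\lambda\}\stackrel{\mathcal K}{\longrightarrow}E_\lambda(t)$ for every $t$ (which holds for a.e.\ $\lambda$, by the local uniform convergence of $u_n$) and setting $d_n:=\dist^{\psi_n^\circ}(\cdot,E^n_\lambda(t))$, $d:=\dist^{\psi^\circ}(\cdot,E_\lambda(t))$, one also gets $d_n\to d$ locally uniformly on $(\R^N\times(0,T^*))\setminus E_\lambda$. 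Conditions (a)--(c) of Definition~\ref{Defsol} for $E_\lambda$ are soft: they follow from these convergences, from the continuity of $u$ (which yields the left continuity (b) of its sublevels), and from the uniform barrier speed bound (which yields the extinction property (c)). The real work is the differential inequality (d).

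For (d), since $u_n$ is a level set solution of the smooth problem, Lemma~\ref{lem:visco} provides for $E^n_\lambda$ a field $z_n\in L^\infty$ with $z_n\in\partial\p_n(\nabla d_n)$ a.e.\ (hence $\p_n^\circ(z_n)=1$ and $z_n\cdot\nabla d_n=\p_n(\nabla d_n)$ a.e.\ off $E^n_\lambda$, by Euler's identity and \eqref{subp}), satisfying $\partial_td_n\ge\Div z_n+g_n-Ld_n$ in $\mathcal{D}'$ off $E^n_\lambda$ with $M=L$, $\Div z_n$ a Radon measure there, and $(\Div z_n)^+\le C(\delta)$ on $\{d_n\ge\delta\}$. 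The key point is that $C(\delta)$ can be taken independent of $n$; I would get this \emph{not} from the semiconcavity of $d_n$ (whose constant degenerates as $\p_n\to\p$), but from the purely geometric fact that $\{d_n(\cdot,t)\le\delta'\}$ has an inner $W^{\psi_n}(\cdot,\delta')$ at each boundary point, hence --- by the uniform $\p_n$-regularity --- an inner $W^{\p_n}(\cdot,\e_0\delta')$, which bounds its $\p_n$-curvature, and thus $(\Div z_n)^+$ on $\{d_n=\delta'\}$, by $C/(\e_0\delta')\le C/(\e_0\delta)$ for $\delta'\ge\delta$; this is essentially the argument by which \cite{CMNP17} establishes the distributional formulation for $\p$-regular mobilities. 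With this in hand, on $\{d\ge\delta\}$ the sequences $z_n$ (since $\p_n^\circ(z_n)=1$ and $\p_n^\circ\to\p^\circ$ uniformly on the sphere) and $\nabla d_n$ (since $\psi_n^\circ(\nabla d_n)=1$ a.e.) are bounded in $L^\infty$, and $\Div z_n$ is bounded in the Radon measures (from $(\Div z_n)^+\le C(\delta)$ and $\int_B\Div z_n=\int_{\partial B}z_n\cdot\nu$); along a further subsequence $z_n\wtos z$, $\nabla d_n\wtos\nabla d$ in $L^\infty$, $\Div z_n\wtos\Div z$ as measures, with $\p^\circ(z)\le1$ a.e.\ and $(\Div z)^+\in L^\infty(\{d\ge\delta\})$. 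Since $d_n\to d$ \emph{uniformly}, a div--curl type integration by parts ($\int\varphi\,z_n\cdot\nabla d_n=-\int d_n(\nabla\varphi\cdot z_n+\varphi\,\Div z_n)$, passed to the limit term by term) gives $z_n\cdot\nabla d_n\wto z\cdot\nabla d$ in $\mathcal{D}'$; testing against $\varphi\ge0$, using $z_n\cdot\nabla d_n=\p_n(\nabla d_n)$ and the lower semicontinuity of $v\mapsto\int\varphi\,\p_n(\nabla v)$ along $\p_n\to\p$, $\nabla d_n\wtos\nabla d$, one obtains $z\cdot\nabla d\ge\p(\nabla d)$ a.e.; since also $z\cdot\nabla d\le\p^\circ(z)\,\p(\nabla d)\le\p(\nabla d)$, it follows that $z\cdot\nabla d=\p(\nabla d)$ and $\p^\circ(z)=1$ where $\nabla d\neq0$, i.e.\ $z\in\partial\p(\nabla d)$ a.e.\ by \eqref{subp}. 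Passing finally to the limit in $\partial_td_n\ge\Div z_n+g_n-Ld_n$ (using $d_n\to d$ locally uniformly, $\Div z_n\wtos\Div z$, $g_n\wtos g$) gives $\partial_td\ge\Div z+g-Ld$ in $\mathcal{D}'$ on $(\R^N\times(0,T^*))\setminus E_\lambda$, so $E_\lambda$ is a superflow with $M=L$.

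Hence $u$ is a level set supersolution and, by the symmetry above, a level set solution of \eqref{oee} with $(\p,\psi,g)$ and datum $u^0$; by Theorem~\ref{th:lscomp} it is the unique one, so the subsequential limit does not depend on the subsequence extracted in the first step, and the precompactness of $\{u_n\}$ forces $u_n\to u$ locally uniformly on $\R^N\times[0,+\infty)$. I expect the main obstacle to be the passage to the limit in (d): on one side, the uniformity in $n$ of the bound on $(\Div z_n)^+$, which is precisely where the uniform $\p_n$-regularity enters and where the semiconcavity argument of Lemma~\ref{lem:visco} must be replaced by the inner-Wulff-shape estimate; on the other side, the identification of the limiting field $z$ as a selection of $\partial\p(\nabla d)$, for which only weak-$*$ convergence of $\nabla d_n$ is available and the div--curl compensation is needed.
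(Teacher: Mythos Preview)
Your strategy is exactly the paper's: Arzel\`a--Ascoli compactness for $(u_n)$ via Wulff-shape barriers, then passage to the limit in the differential inequality~\eqref{eq:supersol} for the sublevel sets, using the uniform $\p_n$-regularity to bound $(\Div z_n)^+$ by $(N-1)/(\e_0 d_n)$ and a div--curl argument to identify $z\in\partial\p(\nabla d)$; uniqueness then comes from Theorem~\ref{th:lscomp}. The key technical insights --- that the semiconcavity constant of $d_n$ degenerates but the inner-Wulff-shape bound on $(\Div z_n)^+$ does not, and that $z_n\cdot\nabla d_n\to z\cdot\nabla d$ by compensated compactness --- are exactly those of the paper.

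One step is looser than you suggest: the claim that $d_n\to d$ \emph{locally uniformly in space--time} on $(\R^N\times(0,T^*))\setminus E_\lambda$ does not follow directly from Kuratowski convergence of the time-slices. Kuratowski convergence for each $t$ gives $d_n(\cdot,t)\to d(\cdot,t)$ locally uniformly in $x$ at fixed $t$, but to upgrade this you would need two-sided equicontinuity of $d_n$ in time, and the barrier estimate only gives the one-sided bound $d_n(x,t+s)^2\ge d_n(x,t)^2-2Cs$. The paper handles this by observing (via this very one-sided bound) that there is an at most countable set $\mathcal N\subset(0,T^*)$ outside which $d_n(\cdot,t)\to d(\cdot,t)$; the integration-by-parts/div--curl step is then carried out with test functions $\eta\ge 0$ and with the auxiliary quantities $m_n(t)=\min_{\overline B}(d_n-d)$, $M_n(t)=\max_{\overline B}(d_n-d)$, which tend to~$0$ for $t\notin\mathcal N$ and are used to exploit only the sign of $(\Div z_n-(N-1)/(\e_0\delta))$. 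Your direct integration by parts $\int\varphi\,z_n\cdot\nabla d_n=-\int d_n(\nabla\varphi\cdot z_n+\varphi\,\Div z_n)$ is legitimate once you know $d_n\to d$ in $C^0$ against the measures $\Div z_n$; the $m_n/M_n$ device is precisely what replaces that missing uniform convergence.
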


\begin{proof}
A first observation is that the functions
$u_n$ remain uniformly
continuous in space and time on $\R^N\times [0,T]$ for all $T>0$,
with a modulus depending only on the modulus of continuity $\omega$ of $u^0$
and the Lipschitz constant $M$. Indeed,  by Proposition~\ref{prop:stabsmooth} below it follows that for any $\lambda<\lambda'$
$$
\dist^{\po_n}(\{u_n(\cdot, t)\leq \lambda\},\{u_n(\cdot, t)\geq \lambda'\})\ge 
\Delta e^{-\beta Mt }\,,
$$
where $\Delta:=\omega^{-1}(\lambda'-\lambda)\geq\dist^{\po}(\{u^0\leq \lambda\},\{u^0\geq \lambda'\})>0$, and $\beta>0$ depends (for large $n$) only on $\p$ and $\psi$ (see \eqref{eq:compani0}). Therefore, $u_n(\cdot, t)$ is uniformly continuous with modulus of continuity with respect to the norm $\po_n$ given by $\omega(e^{\beta Mt }\cdot )$. 
As for the equicontinuity in time, we set $\omega_T(s):=\omega(e^{\beta MT }s)$ and we start by observing that for any $x\in \R^N$, $\e>0$, $t\in (0, T]$, and $n\in N$ we have
$$
W^{\p_n}(x, \omega_T^{-1}(\e))\subseteq\{y:u_n(y,t)>u_n(x,t)-\e\}.
$$ 
Therefore, by standard comparison results we have that $u_n(x,t')>u_n(x,t)-\e$ provided that 
$0<t'-t<\tau$, where $\tau$ is the extintion time for $W^{\p_n}(x, \omega_T^{-1}(\e))$ under the evolution \eqref{oee}. Analogously, one shows that  $u_n(x,t')<u_n(x,t)+\e$ if $0<t'-t<\tau$. Since $\tau$ is bounded away from zero by a quantity independent of $n$ 
(depending only on $\e$, $\sup_n\|g_n\|_\infty$ and, for $n$ large, on $\p$ and $\psi$), see for instance \cite[Remark 4.6]{CMNP17}. This establishes the equicontinuity in time. 
 
Hence, up to a subsequence (not relabelled), we may assume that $u_n$ converges
locally uniformly to some $u$.
In view of Theorem~\ref{th:lscomp}, it is enough to show that $u$ is  a solution in the sense of Definition~\ref{deflevelset1}, that is, that for a.e. $\lambda\in \R$ the set $E_\lambda:=\{u\le\lambda\}$  is a superflow in the sense of Definition~\ref{Defsol} and $A_\lambda:=\{u<\lambda\}$ a subflow.

We prove the assertion for $E_\lambda$. We first notice that  since $u_n\to u$ locally uniformly, the 
 Kuratowski limit superior of  the sets
$E_n:=\{u_n\le\lambda\}$ as $n\to\infty$ is contained in $E_\lambda$.

By Lemma~\ref{lem:visco}, the sets $E_n$ are superflows in the
sense of Definition~\ref{Defsol}. We consider
$d_n(x,t):=\dist^{\pso_n}(x,E_n(t))$,
$d(x,t):=\dist^{\pso}(x,E_\lambda(t))$,
the corresponding distance functions, which are finite up
to some time $T^*_n, T^*\in (0,+\infty]$ respectively, 
where $T^*$ is defined according with Definition \ref{Defsol}. Notice that $T^*$ is increasing with respect to $\lambda$, and that if $\lambda$ is a continuity point, then  we have $T^*_n\to T^*$,
as $n\to\infty$.
Reasoning as in the proof of Lemma~\ref{lem:visco}-\textit{Step 1}
we can show that $d_n(x,t+s)^2\ge d_n(x,t)^2-2Cs$ for some constant $C$
which does not depend on $n$. Indeed, $C$ is essentially the maximal
speed of the Wulff shape $W^{\psi_n}$, which is bounded by 
$\max_{\xi}\psi_n(\max_{\partial W^{\psi_n}}\kappa_{\p_n}+\|g_n\|_\infty)$. The
curvature $\kappa_{\p_n}$ of $\partial W^{\psi_n}$ is in $[0,(N-1)/\e]$, thanks
to the assumption that $\psi'_n:=\psi_n-\e\p_n$ is convex, which yields that
$W^{\psi_n}=W^{\psi'_n}+\e W^{\p_n}$.

This implies (see for instance details in
the proof of~\cite[Prop.~4.4]{CMP4})
that one can find a set 
at most countable $\mathcal{N}\subset
(0,T^*)$, such that for all $t\not\in\mathcal{N}$,
$d_n(\cdot, t)\to d(\cdot,t)$ locally uniformly.
If $B\subset\subset (\R^N\times (0,T^*))\setminus E_\lambda$,
one has $B\cap E_n=\emptyset$ for $n$ large enough and
\[
\partial_t d_n \ge \Div z_n + g_n - Ld_n
\]
in the distributional sense in $B$, thanks to~\eqref{eq:supersol} and
Lemma~\ref{lem:visco}. Here, $z_n=\nabla\p_n(\nabla d_n)$.
Notice that $z_n$ are (for $n$ large) well defined and bounded in $L^\infty(\R^N\times (0,T))$ for any $T<T^*$.
In the limit, we find that~\eqref{eq:supersol}
holds for $d$, with $z$ the weak-$*$ (local in time)
limit of $(z_n)_n$ (or rather, in fact, a subsequence).
It remains to show that $z\in\partial \p(\nabla d)$ a.e.~in $B$.
An important observation is that, using again the $\p_n$-regularity
of $\psi_n$, one can show that $\Div \nabla\p_n(\nabla d_n)\le (N-1)/(\e_0 d_n)$,
hence it is bounded in $\{d_n>\delta\}$. In particular, in the limit,
$(\Div z)^+\chi_{\{d>\delta\}}\in L^\infty(\R^N\times (0,T)^*)$.


To show $z\in\partial \p(\nabla d)$ a.e.~in $B$,
we establish that $z\cdot\nabla d\ge \p(\nabla d)$~a.e.~in $B$.
The proof here is as in~\cite{CMP4}.
There exists $\delta$ such that for all $n$ large enough, $d_n\ge \delta$
in $B$, hence $\Div z_n\le (N-1)/( \e_0 \delta)$.
Let $\eta\in C_c^\infty(B;\R_+)$, then
\[
\int_B \p(\nabla d)\eta dxdt\le\liminf_n
\int_B \p_n(\nabla d_n)\eta dxdt = \liminf_n \int_B (z_n\cdot\nabla d_n)\eta \,
dxdt.
\]
On the other hand,
\begin{equation}\label{irtp}
\int_B (z_n\cdot\nabla d_n)\eta dxdt=
\int_B (z_n\cdot\nabla d)\eta dxdt + \int_B (z_n\cdot\nabla (d_n-d))\eta \, dxdt,
\end{equation}
and $\lim_n \int_B (z_n\cdot\nabla d)\eta dxdt=\int_B (z\cdot\nabla d)\eta \,
dxdt$ since $z_n\wtos z$.

It remains to prove that the second addend in the right hand side of \eqref{irtp} tends to zero as $n\to +\infty$. Set
$$
m_n(t)=\min_{x:(x,t)\in\overline{B}}(d_n(x,t)-d(x,t)), \qquad M_n(t)=\max_{x:(x,t)\in\overline{B}}(d_n(x,t)-d(x,t)).
$$
Then for all $t\not\in\mathcal{N}$,
$M_n(t)-m_n(t)\to 0$.
One has
\begin{multline*}
\int_B (z_n\cdot\nabla (d_n-d))\eta dxdt
= \int_B(z_n\cdot\nabla (d_n-d-m_n(t)))\eta \, dxdt
\\
= -\int_B (d_n-d-m_n) \eta \, \Div z_n \,dxdt - \int_B(d_n-d-m_n)z_n\cdot\nabla\eta \, dxdt.
\end{multline*}
The last integral goes to zero as $n\to\infty$. 
Since $(d_n-d-m_n(t))\eta \ge 0$ we have
\[
-\int_B (d_n-d-m_n) \eta \, \Div z_n \, dxdt
\ge -\frac{N-1}{\e_0 \delta}
\int_B (d_n-d-m_n)\eta dxdt\stackrel{n\to\infty}{\longrightarrow} 0.
\]
Using instead $d_n-d-M_n$, we show the reverse inequality, and we deduce
\[
\int_B \p(\nabla d)\eta dxdt\le\int_B (z\cdot\nabla d)\eta \, dxdt
\]
which concludes the proof.
\end{proof}

\section{Existence 
by approximation}\label{sec:genmob}

\subsection{A useful estimate: comparison with different forcing terms}\label{sec:forcing}

We prove in this section and the following a series of comparison
results, which will then be combined together to deduce a global
comparison result for flows with possibly different mobilities.
In this section, we shall assume that the surface
tensions $\p,\,\psi$ are smooth and elliptic, so that we can work in the classical viscosity setting.
In the limit, our main estimate will also hold for crystalline
flows in the sense of Definition~\ref{Defsol}.

We start by recalling standard comparison results for flows with
constant velocities, however we pay a special attention to the
particular metrics in which our velocities are expressed.
We first consider the equation
\begin{equation}\label{eq:nocurv}
u_t = \psi(\nabla u)g(x,t).
\end{equation}

The following result is a slight variant of 
the well known result~\cite[Theorem~8.1]{BarlesBook}:
\begin{lemma}\label{lem:lipcontrol}
Consider $u^0:\R^N\to \R$,
bounded and $\Lambda$-Lipschitz continuous with respect to a norm  $\eta$,
smooth and elliptic, 
such that
\begin{equation}\label{eq:controletopsi}
\psi\le\beta\eto. 
\end{equation}
Assume $g$ is bounded, continuous and  $M$-Lipschitz in space in 
the norm $\eta$.
Let $u(x,t)$ be a viscosity solution of~\eqref{eq:nocurv} with
initial datum $u_0$.
Then for all $t\ge 0$, the function $u(\cdot,t)$
is $\Lambda e^{\beta Mt}$-Lipschitz continuous in the norm $\eta$.
\end{lemma}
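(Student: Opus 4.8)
The plan is to run the classical doubling-of-variables argument for viscosity solutions, tracking carefully the role of the norm $\eta$ in which the Lipschitz bound is measured. Fix $t>0$ and set $\Lambda_t := \Lambda e^{\beta M t}$; the goal is the estimate $u(x,t)-u(y,t)\le \Lambda_t\,\eta(x-y)$ for all $x,y$. I would introduce, for parameters $\alpha,\gamma>0$, the auxiliary function
\[
\Phi(x,y,t) := u(x,t)-u(y,t) - \Lambda_s(t)\,\eta(x-y) - \frac{\alpha}{2}(|x|^2+|y|^2) - \gamma t,
\]
where $\Lambda_s(t):=\Lambda e^{\beta M t}$ solves $\dot\Lambda_s = \beta M \Lambda_s$. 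The quadratic penalization in $x,y$ guarantees that $\Phi$ attains a maximum at some point $(\bar x,\bar y,\bar t)$, and the term $-\gamma t$ lets us assume the maximum is not attained at $\bar t$ arbitrarily close to $0$ only after checking the initial time; one first argues that if $\sup\Phi>0$ then, since at $t=0$ one has $\Phi(x,y,0)\le u^0(x)-u^0(y)-\Lambda\eta(x-y)\le 0$ by the hypothesis on $u^0$, the maximum must occur at some $\bar t>0$.

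At an interior maximum point with $\bar t>0$, I would use the theorem on sums (Crandall--Ishii) to produce the appropriate sub/superjets of $u(\cdot,\bar t)$ at $\bar x$ and $\bar y$. Since equation~\eqref{eq:nocurv} is first order (no curvature term), only the first-order information and the time derivative are needed; this is the reason the argument is elementary despite the geometric setting. Writing $p := \Lambda_s(\bar t)\,\nabla\eta(\bar x-\bar y) + \alpha\bar x$ for the spatial slope felt by $u$ at $\bar x$, and the analogous slope $q := \Lambda_s(\bar t)\,\nabla\eta(\bar x-\bar y) - \alpha\bar y$ at $\bar y$ (with suitable care if $\bar x=\bar y$, in which case the difference is already controlled), subtracting the viscosity inequalities gives
\[
\dot\Lambda_s(\bar t)\,\eta(\bar x-\bar y) + \gamma \le \psi(p)\,g(\bar x,\bar t) - \psi(q)\,g(\bar y,\bar t).
\]
Now I would estimate the right-hand side: since $\psi\le\beta\,\eto$ by~\eqref{eq:controletopsi} and $\nabla\eta(\bar x-\bar y)\in\partial\eta(\bar x-\bar y)$ lies on the unit sphere of $\eto$ by~\eqref{subp}, one has $\eto(\Lambda_s\nabla\eta(\bar x-\bar y))=\Lambda_s$, so $\psi(p)\le\beta\Lambda_s + O(\alpha|\bar x|)$ and likewise for $\psi(q)$. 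Splitting $\psi(p)g(\bar x)-\psi(q)g(\bar y) = \psi(p)(g(\bar x)-g(\bar y)) + (\psi(p)-\psi(q))g(\bar y)$ and using the $M$-Lipschitz bound $|g(\bar x,\bar t)-g(\bar y,\bar t)|\le M\eta(\bar x-\bar y)$ together with the $1$-homogeneity and convexity of $\psi$ to bound $|\psi(p)-\psi(q)|\le\beta\eto(p-q)=\beta\eto(\alpha\bar x+\alpha\bar y)$, we obtain
\[
\dot\Lambda_s(\bar t)\,\eta(\bar x-\bar y) + \gamma \le \beta M \Lambda_s(\bar t)\,\eta(\bar x-\bar y) + (\text{terms }O(\alpha(|\bar x|+|\bar y|))\cdot\|g\|_\infty).
\]
Because $\dot\Lambda_s=\beta M\Lambda_s$, the leading terms cancel exactly, leaving $\gamma \le O(\alpha(|\bar x|+|\bar y|))$. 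The standard penalization lemma ensures $\alpha(|\bar x|^2+|\bar y|^2)\to 0$, hence $\alpha(|\bar x|+|\bar y|)\to 0$, as $\alpha\to0$; letting $\alpha\to 0$ yields $\gamma\le 0$, a contradiction. Therefore $\sup\Phi\le 0$ for every $\gamma>0$, and sending $\gamma\to 0$ then $\alpha\to 0$ gives $u(x,t)-u(y,t)\le\Lambda_t\,\eta(x-y)$; swapping $x$ and $y$ and using evenness of $\eta$ gives the full Lipschitz bound.

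The main obstacle — and the only genuinely delicate point — is the non-smoothness of $\eta$ at the origin: $\nabla\eta(\bar x-\bar y)$ is not defined when $\bar x=\bar y$, so the doubling function $(x,y)\mapsto\eta(x-y)$ is not $C^1$ on the diagonal and the theorem on sums cannot be applied verbatim there. The standard fix is to observe that if the maximum occurs with $\bar x=\bar y$ then $\Phi(\bar x,\bar y,\bar t)= -\alpha|\bar x|^2-\gamma\bar t<0\le\sup\Phi$ (assuming $\sup\Phi>0$), so this case cannot arise; away from the diagonal $\eta$ is smooth and elliptic by hypothesis and everything goes through. One must also take minor care that $\eto$ appears in~\eqref{eq:controletopsi} while the Lipschitz constants are measured with $\eta$ — but this is exactly matched by the identity $\eto(\nabla\eta(\cdot))=1$ from~\eqref{subp}, which is what makes the constant $\beta$ come out correctly. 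All remaining steps are the routine bookkeeping of the Barles--Souganidis comparison machinery.
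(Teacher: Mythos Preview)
Your proof is correct and follows essentially the same doubling-of-variables strategy as the paper; the only differences are that the paper doubles the time variable explicitly (working with $u(x,t)-u(y,s)$ penalized by $|t-s|^2/(2a)$) rather than invoking the parabolic theorem on sums as a black box, and that it uses a strict supersolution $C(t)$ of the ODE $C'=\beta M C$ (with $C(0)=\Lambda$ and $C'-\beta M C\ge\beta M\delta>0$, letting $\delta\to 0$ at the end) in place of your exact solution $\Lambda e^{\beta M t}$ together with the $-\gamma t$ term. Both variants are standard and yield the same estimate.
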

\begin{proof}
We start by observing that by classical results the solution $u$ is uniformly continuous, see for instance \cite{GGIS91}.
The rest of the  proof is an adaptation of the argument in  \cite[proof of Theorem 8.1]{BarlesBook}.
Let $\delta>0$ be given, and let $C$ be a smooth function such
that 
\begin{equation}\label{cprimo}
C' -\beta M C\ge \beta M\delta>0,
\end{equation}
with $C(0)=\Lambda$.
Set
\[
\sigma:= \sup_{\begin{subarray}{c}x,y\in\R^N\\t\in [0,T] \end{subarray}}
 u(x,t)-u(y,t)-C(t)\eta(x-y).
\]
We claim that $\sigma=0$. Using this claim, 
we have that
\[
u(x,t)-u(y,t)\le (\Lambda e^{\beta Mt}+\delta (e^{\beta Mt}-1))\eta(x-y)
\]
for all $x,y,t\le T$, and sending $\delta\to 0$ we
conclude the proof of the lemma.

We are left to prove the claim that $\sigma=0$. Arguing by contradiction, assume that $\sigma>0$. 
Consider a  maximum  point $( \bar x, \bar y, \bar t,  \bar s)$ 
in $\R^{2N}\times [0,T]^2$ for the function 
\[\vp(x,y,s,t) = u(x,t)-u(y,s)-C(t)\eta(x-y)-\frac{|t-s|^2}{2a}
-b\frac{|x|^2+|y|^2}{2},
\]
where $a,b>0$ are small parameters (notice  that
$\vp(x,y,0,0)\le 0$). 
For $b$ small enough, then  $\vp(\bar x,\bar y,\bar t,\bar s)\ge\sigma/2>0$, and
then by standard arguments (using in particular 
that $|\bar x|,|\bar y|\le c/\sqrt{b}$, and that for fixed $b$, both $\bar t$ and $\bar s$ converge, up to a subsequence, 
to the same positive value as $a\to 0$, see
for instance~\cite[Lemma~5.2]{BarlesBook})
we may assume $0< \bar t,\bar s \le T$, so that:
\begin{align*}
& C'(\bar t)\eta(\bar x-\bar y) +\frac{\bar t-\bar s}{a}\le \psi(C(\bar t)\nabla \eta(\bar x-\bar y)+b\bar x)g(\bar x,\bar t)\,,\\
&  \frac{\bar t-\bar s}{a}\ge \psi(C(\bar t)\nabla \eta(\bar x-\bar y)-b\bar y)g(\bar y,\bar s)\,.\\
\end{align*}
Evaluating the difference and recalling \eqref{cprimo} we obtain:
\[
\beta M(C(\bar t)+\delta)\eta(\bar x-\bar y)\le
\psi(C(\bar t)\nabla\eta(\bar x-\bar y)+b\bar x)g(\bar x,\bar t)
-\psi(C(\bar t)\nabla \eta(\bar x-\bar y)-b\bar y)g(\bar y,\bar s).
\]
For fixed $b>0$, we can then let $a\to 0$  and denote by 
$\tilde t\in (0,T]$ subsequence) the  common limit of $\bar t$ and $\bar s$ as $a\to 0$, and by  $\tilde x$ and $\tilde y$ the limit of $\bar x$ and $\bar y$ respectively. Thus,
using~\eqref{eq:controletopsi}, we obtain
\begin{multline*}
M(C(\tilde t)+\delta)\eta(\tilde x-\tilde y)
\\
\le \frac{1}{\beta}\psi(C(\tilde t)\nabla\eta(\tilde x-\tilde y)+b\tilde x)g(\tilde x,\tilde t)
-\frac{1}{\beta}\psi(C(\tilde t)\nabla \eta(\tilde x-\tilde y)-b\tilde y)g(\tilde y,\tilde t)\\
\le 
\frac{1}{\beta} (\psi(C(\tilde t)\nabla\eta(\tilde x-\tilde y)+b\tilde x)
- \psi(C(\tilde t)\nabla \eta(\tilde x-\tilde y)-b\tilde y))g(\tilde y,\tilde t)
\\+ \eto(C(\tilde t)\nabla\eta(\tilde x-\tilde y)+bx) M\eta(\tilde x-\tilde y).
\end{multline*}
We deduce
\begin{multline*}
C(\tilde t)+\delta\le \eto(C(\tilde t)\nabla\eta(\tilde x-\tilde y)+b\tilde x)
\\+ \frac{\psi(C(\tilde t)\nabla\eta(\tilde x-\tilde y)+b\tilde x)
-\psi(C(\tilde t)\nabla \eta(\tilde x-\tilde y)-b\tilde y)}{\beta M\eta(\tilde x-\tilde y)}\|g\|_\infty,
\end{multline*}
and sending $b\to 0$ (and observing that $\eta(\tilde x-\tilde y)\not\to 0$  
as $\sigma>0$ and $u$ is uniformly continuous), we find that if $\hat t$ is a limit point 
of $\tilde t$, then $C(\hat t)+\delta\le C(\hat t)$,  which gives a contradiction. 
Hence one must have $\sigma=0$. 
\end{proof}

In the next lemma we show that  if $E^0\subset F^0$ are initial sets and $1-\chi_E$, $1-\chi_F$
are viscosity solutions of~\eqref{eq:nocurv}, starting  from
$1-\chi_{E^0}$ and $1-\chi_{F^0}$, respectively, then  $\dist^{\eta}(\partial E(t),\partial F(t))\ge 
\dist^{\eta}(\partial E^0,\partial F^0)e^{-\beta Mt}$.

A splitting strategy will then extend this result to the solutions of~\eqref{eq:viscoflow}.

\begin{lemma} \label{lem:compgvisc}
Let $\eta$ be a smooth and elliptic norm satisfying
\eqref{eq:controletopsi}.
Let $g_1$, $g_2$ be two admissible forcing terms satisfying assumptions H1), H2) of 
Section~\ref{stass}, and both $M$-Lipschitz in the $\eta$ norm.
Assume
\begin{equation}\label{gmg}
g_2-g_1\le  c<+\infty \textup{ in } \R^N\times[0,+\infty).
\end{equation}
Let $E^0\subset F^0$ be two closed sets with $\dist^{\eta}(E^0,\R^N\setminus F^0):=\Delta >0$. Assume that $1-\chi_{E(t)}$ is a viscosity supersolution of
$u_t = \psi(\nabla u)g_1(x,t)$ starting from $1-\chi_E^0$,
and $1-\chi_{F(t)}$ a subsolution of $v_t=\psi(\nabla v)g_2(x,t)$ starting
from $1-\chi_F^0$. Then at all time $t\ge 0$,
\begin{equation}\label{eq:stimg1g2}
\dist^{\eta}(E(t),\R^N\setminus F(t)) \ge
\Delta e^{-\beta M t} +c\frac{1-e^{- \beta Mt}}{M}.
\end{equation}
\end{lemma}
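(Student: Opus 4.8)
The plan is to reduce \eqref{eq:stimg1g2} to an application of the comparison principle for pure transport equations, after modifying the evolution of one of the two sets by a time-dependent ``inflation''. First I would record the obvious fact that $\dist^{\eta}(E(t),\R^N\setminus F(t))>0$ is equivalent to the inclusion $E(t)+ W^{\eta}(0,\rho)\subseteq F(t)$ for every $\rho$ smaller than that distance, or, in terms of signed distances, to $d^{\eta}_{F(t)}\le d^{\eta}_{E(t)}-\rho$; so it suffices to show that with $\rho(t):=\Delta e^{-\beta M t}+c(1-e^{-\beta Mt})/M$ the function $v(x,t):=1-\chi_{F(t)}(x)$ stays below a suitably shifted version of $w(x,t):=1-\chi_{E(t)}(x)$. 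Note that $\rho$ solves the ODE $\rho'=-\beta M\rho+c$ with $\rho(0)=\Delta$.

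The key step is a splitting/perturbation argument in the spirit of the one announced after the statement of Lemma~\ref{lem:compgvisc}. Consider the ``fattened'' set $\tilde E(t):=\{x:\dist^{\eta}(x,E(t))< \rho(t)\}$, equivalently $1-\chi_{\tilde E(t)}(x)=(1-\chi_{E(t)})(\text{point at }\eta\text{-distance }\rho(t))$; more precisely I would work with the function $\tilde w(x,t):= (d^{\eta}_{E(t)}(x)-\rho(t))\wedge 1 \vee(-1)$ or simply with $1-\chi_{\tilde E(t)}$. Since $1-\chi_{E(t)}$ is a viscosity supersolution of $u_t=\psi(\nabla u)g_1$, and since translating/eroding by a level-set amount $\rho(t)$ costs exactly a term governed by $\eto(\nabla)\cdot|\rho'(t)|$ on the (geometric) evolution — here is where $\psi\le\beta\eto$ enters — one checks that $1-\chi_{\tilde E(t)}$ is a viscosity supersolution of $u_t=\psi(\nabla u)g_1(x,t)-|\rho'(t)|\,\eto(\nabla u)\,$, hence (using $\psi\le\beta\eto$ and $\eto(\nabla u)=\psi/\beta$-type bounds on the relevant gradients, together with $g_2-g_1\le c$) of $u_t=\psi(\nabla u)g_2(x,t)$ provided $-|\rho'|\beta^{-1}+\,$(a lower bound coming from the $M$-Lipschitz continuity of $g_2$ and $\eta(\text{displacement})\le\rho$)$\,\ge -c/\beta\cdot(\dots)$ — i.e. provided $\rho$ satisfies exactly the ODE above. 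The mechanism is literally the one used inside the proof of Lemma~\ref{lem:lipcontrol}: the inflation rate $\rho'$ has to dominate the spatial oscillation $M\rho$ of the forcing plus the sign defect $c$, which is why the multiplicative factor $\beta$ and the ODE $\rho'=-\beta M\rho+c$ appear.

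Once $1-\chi_{\tilde E(t)}$ is seen to be a supersolution of $v_t=\psi(\nabla v)g_2$ with $\tilde E(0)=\{ \dist^{\eta}(\cdot,E^0)<\Delta\}\subseteq F^0$ (here I use $\dist^{\eta}(E^0,\R^N\setminus F^0)=\Delta$), the standard comparison principle for~\eqref{eq:nocurv}, i.e. \cite[Theorem~8.1]{BarlesBook} type uniqueness for bounded uniformly continuous viscosity sub/supersolutions, forces $1-\chi_{F(t)}\le 1-\chi_{\tilde E(t)}$, equivalently $F(t)\supseteq \tilde E(t)$, which is precisely $\dist^{\eta}(E(t),\R^N\setminus F(t))\ge\rho(t)$. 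I would also note the degenerate case $c=+\infty$ is excluded by \eqref{gmg}, and the case where the distance becomes $0$ or a set becomes empty is handled trivially (both sides of \eqref{eq:stimg1g2} can only help) or by the extinction/left-continuity conventions already in place.

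The main obstacle I expect is making rigorous, at the viscosity level, the claim that eroding a (merely upper/lower semicontinuous) characteristic-function solution by a $C^1$ amount $\rho(t)$ produces a viscosity supersolution of the perturbed transport equation with the precise extra term $-|\rho'(t)|\eto(\nabla u)$: one has to test against smooth functions at contact points, transport the test function by the optimal displacement realizing $\dist^{\eta}(\cdot,E(t))$ exactly as in Step~2 of the proof of Lemma~\ref{lem:visco}, and there carefully combine the $1$-Lipschitz-in-$\eta$ property of the distance, the identity $\eto(\nabla d^{\eta})=1$, the bound $\psi\le\beta\eto$, and the $M$-Lipschitz continuity of $g_2$ to absorb everything into the single scalar ODE for $\rho$. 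Everything else is a routine invocation of already-cited comparison principles.
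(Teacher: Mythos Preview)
Your inflation idea is natural, but the proof as written has a gap in the final comparison, and it diverges from the paper's argument in a way worth noting.

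The paper does not inflate sets. It passes to \emph{continuous} level-set functions: it sets $u_0:=-\Delta\vee(2\Delta\wedge d^\eta_{E^0})$ and $v_0:=-2\Delta\vee(\Delta\wedge d^\eta_{F^0})$ (so $v_0+\Delta\le u_0$), lets $u,v$ be the viscosity solutions of the $g_1$- and $g_2$-equations with these data, and invokes Lemma~\ref{lem:lipcontrol} to obtain that $u(\cdot,t),v(\cdot,t)$ are $e^{\beta Mt}$-Lipschitz in $\eta$. This Lipschitz control is the whole mechanism: at any contact point of a test function $\varphi$ with the Lipschitz function $w:=v+\Delta-c(e^{\beta Mt}-1)/M$ one has $\eto(\nabla\varphi)\le e^{\beta Mt}$, hence $c\,\psi(\nabla\varphi)\le c\beta e^{\beta Mt}$, and this is exactly what makes $w$ a subsolution of the $g_1$-equation. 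One then gets $w\le u$ by comparison of \emph{continuous} viscosity solutions, and the distance estimate follows from the Lipschitz bound on $v$. Your route never uses Lemma~\ref{lem:lipcontrol}; the gradient control would instead come from the $1$-Lipschitz property of $\dist^\eta$, which is a genuinely different (and in some sense more direct) source.

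The gap in your version is the last step: you invoke ``\cite{BarlesBook}-type uniqueness for bounded uniformly continuous sub/supersolutions'' to compare $1-\chi_{\tilde E}$ and $1-\chi_F$, but neither function is continuous, so that theorem does not apply. You would need a discontinuous comparison principle in the style of \cite{BaSoSou,BarlSouga98}, which is a different and more delicate statement; alternatively, replacing one side by a continuous surrogate essentially leads you back to the paper's construction. A smaller point: the inflation step can indeed be justified by transporting the test function as in Step~2 of Lemma~\ref{lem:visco}, but the key identity one needs is $\nabla\varphi(\bar x)\cdot(\bar x-\bar y)=\rho\,\eto(\nabla\varphi(\bar x))$ (coming from the inner Wulff ball $W^\eta(\bar y,\rho)\subset\tilde E$ touching $\partial\tilde E$ at $\bar x$, which forces $\nabla\varphi(\bar x)$ into its normal cone). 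Combined with $\psi\le\beta\eto$, this yields the ODE $\rho'\le-\beta M\rho-\beta c$, not $\rho'=-\beta M\rho+c$; your sketch does not isolate this identity and gets the ODE slightly off.
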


\begin{proof} With Lemma~\ref{lem:lipcontrol} at hand, this is a straight
application of standard comparison principles.
We consider first
$u_0(x):=-\Delta\vee(2\Delta\wedge d^\eta_E(x))$ and $v_0(x):=-2\Delta\vee(\Delta\wedge d^\eta_F(x))$, so that
$v_0+\Delta\le u_0$. These functions are both $1$-Lipschitz in the
norm $\eta$.
We then consider the viscosity solutions $u$ of 
$u_t = \psi(\nabla u)g_1(x,t)$ starting from $u_0$,
and $v$ of $v_t=\psi(\nabla v)g_2(x,t)$, starting from $v_0$.
By standard comparison results, $E(t)\subseteq \{u(t)\le 0\}$ and
$F(t) \supseteq \{v(t)\le 0\}$, for all $t\ge 0$.

Thanks to Lemma~\ref{lem:lipcontrol},
$u(\cdot,t), v(\cdot,t)$ are $e^{\beta Mt}$-Lipschitz.
Let now $w(\cdot,t)= v(\cdot,t)+\Delta - c(e^{\beta Mt}-1)/M$, then at $t=0$,
$w(\cdot,0)=v_0+\Delta\le u_0$. We show that $w$ is a subsolution
of $u_t=\psi(\nabla u)g_1(x,t)$, so that $w\le u$. Indeed, if
$\vp$ is a smooth test function and $(\bar x,\bar t)$ a point
of maximum of $w-\vp$, then it is a point of maximum of
$v- (\vp-\Delta + c\beta (e^{\beta Mt}-1)/M)$ so that, 
using \eqref{gmg} and the fact that $v$ is a subsolution, we get
\[
\partial_t\vp(\bar x,\bar t) + c\beta e^{\beta M\bar t}
\le \psi(\nabla\vp(\bar x,\bar t))g_2(\bar x,\bar t)
\le \psi(\nabla\vp(\bar x,\bar t))g_1(\bar x,\bar t)+c\psi(\nabla\vp(\bar x,\bar t)).
\]
Since $\bar x$ is a contact point of the smooth function
$\vp(\cdot,\bar t)$ and the $e^{\beta M\bar t}$-Lipschitz function
$w(\cdot,\bar t)$
(in the $\eta$ norm), then $\eto(\nabla\vp)\le e^{\beta M\bar t}$
at $(\bar x,\bar t)$.
By~\eqref{eq:controletopsi}, $c\psi(\nabla \vp(\bar x,\bar t))\le c \beta e^{\beta M \bar t}$, whence
\[
\partial_t\vp\le \psi(\nabla\vp)g_1 
\]
and this shows that $w$ is a subsolution of this equation, hence
that $w\le u$. Therefore, 
for all $x,t$, $v(x,t)\le u(x,t)-\Delta + c(e^{\beta Mt}-1)/M$.
Thus, for $t\ge 0$ and $x,y\in\R^N$, 
recalling that $v$ is $e^{\beta M\bar t}$-Lipschitz,
\[
v(y,t)\le u(x,t) -  e^{\beta M t}
\left(\Delta e^{-\beta M t} - c\frac{1-e^{-\beta M t}}{M} -\eta(x-y)\right).
\]

It follows that if $\dist^\eta(y,E(t))\le \Delta e^{-\beta M t}-c(1-e^{-\beta M t})/M$, then $v(y,t)\le 1$, and hence $y\in F(t)$, which shows the lemma.
\end{proof}

\subsection{Comparison for different mobilities}\label{sec:mobilities}

In this section we provide the crucial stability estimates with respect to varying mobilities,
not necessarily smooth and elliptic.

\subsubsection{A comparison result with a constant forcing term}
In this subsection we shall assume that $\p,\psi_1,\psi_2$ are smooth and elliptic, and
 that
 \begin{equation}\label{eq:psivicino}
(1-\delta) \psi_2(\xi) \le \psi_1(\xi) \le (1+\delta)\psi_2(\xi) \qquad \text{ for all } \xi\in \R^N,
 \end{equation}
 for some (small) $\delta>0$.
We first show the following: 
\begin{lemma}\label{lem:compmobivisc}
There exists a constant $c_0>0$ depending only on  
$N$ such that the following holds:
let $\Delta>0$, and
let $E$ be a superflow for the equation $V=-\psi_1(\nu)\kappa_\p$ and
$F$ be a subflow for the equation $V=-\psi_2(\nu)(\kappa_\p-c_0\delta/\Delta)$,
with  $\dist^{\po}(E(0),\R^N\setminus F(0))= \Delta$.
Then for all $t$ until extinction of $E$ or $F^c$, there holds $\dist^{\po}(E(t),\R^N\setminus F(t))\ge \Delta$.
\end{lemma}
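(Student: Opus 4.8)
The plan is to reduce the statement to a comparison between two viscosity supersolutions/subsolutions of equations of the form \eqref{eq:viscoflow} in the smooth setting, by constructing an explicit barrier that absorbs the discrepancy between the two mobilities. The key observation is that, since $E$ is a superflow for $V=-\psi_1(\nu)\kappa_\p$ and $F^c$ is a superflow for the equation with mobility $\psi_2$ and curvature shifted by $c_0\delta/\Delta$, by Lemma~\ref{lem:visco} the functions $-\chi_{E(t)}$ and $-\chi_{F(t)}$ are respectively a viscosity supersolution and subsolution of the corresponding geometric PDEs. As in Lemma~\ref{lem:compgvisc} I would first replace the characteristic functions by truncated signed distance functions $u_0:=-\Delta\vee(2\Delta\wedge d^{\po}_{E^0})$ and $v_0:=-2\Delta\vee(\Delta\wedge d^{\po}_{F^0})$, which are $1$-Lipschitz in the $\po$ norm and satisfy $v_0+\Delta\le u_0$, and solve the respective geometric equations with these initial data; by standard comparison $E(t)\subseteq\{u(t)\le0\}$ and $F(t)\supseteq\{v(t)\le0\}$.

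The heart of the matter is then to show that $w(\cdot,t):=v(\cdot,t)+\Delta$ is (a super-solution of, hence dominated by) $u$, i.e.\ that the positive gap of width $\Delta$ between the two level sets is preserved. At a contact point $(\bar x,\bar t)$ of a smooth test function with $w$ — equivalently with $v$ up to the constant $\Delta$ — one has, since $v$ solves the $\psi_2$-equation with the shifted curvature,
\[
\partial_t\vp \le \psi_2(\nabla\vp)\bigl(\Div\nabla\p(\nabla\vp)-c_0\delta/\Delta\bigr),
\]
and I must compare this to $\psi_1(\nabla\vp)\Div\nabla\p(\nabla\vp)$. Writing $\psi_1=\psi_2+(\psi_1-\psi_2)$ and using \eqref{eq:psivicino} to bound $|\psi_1-\psi_2|\le\delta\psi_2$, the error term is $\delta\psi_2(\nabla\vp)|\Div\nabla\p(\nabla\vp)|$. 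The curvature term $\Div\nabla\p(\nabla\vp)$ at such a contact point is controlled because $v$ (being a distance-like function with the superflow/semiconcavity structure from Lemma~\ref{lem:visco}, Step~3) has $\Div\nabla\p(\nabla v)$ bounded by a dimensional constant over the distance to $F^c(t)$, which is $\ge\Delta e^{-(\cdots)t}$ — but to keep things clean at a fixed gap $\Delta$ one argues on the complement $F^c$ where the relevant distance to $E(t)$ is comparably $\Delta$. Thus the error is $\lesssim \delta/\Delta$ pointwise, and choosing $c_0$ equal to the dimensional constant in this curvature bound makes the shift $-c_0\delta/\Delta$ exactly compensate it, giving $\partial_t\vp\le\psi_1(\nabla\vp)\Div\nabla\p(\nabla\vp)$, so that $w$ is a subsolution of the $\psi_1$-equation and $w\le u$ by comparison. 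This yields $v(\cdot,t)+\Delta\le u(\cdot,t)$ for all $t$, whence $\dist^{\po}(E(t),\R^N\setminus F(t))\ge\Delta$ as in the final step of Lemma~\ref{lem:compgvisc}.

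The main obstacle I anticipate is making rigorous the curvature bound at the contact point: one cannot simply assert $\Div\nabla\p(\nabla\vp)$ is bounded for an arbitrary smooth test function $\vp$ — the bound must come from the one-sided (super/sub) structure of the flow, i.e.\ from the fact that the relevant level set of $v$ contains a $\p$-Wulff shape of radius comparable to $\Delta$ touching at the contact point (a consequence of Step~1 of Lemma~\ref{lem:visco} together with the interior-Wulff-shape propagation). Concretely, I would run the barrier argument not against all test functions but by comparing directly with shrinking Wulff-shape subsolutions, as in Step~1 of Lemma~\ref{lem:visco}: the set $F^c(t)$, seen as an evolving set, stays outside $W^\p$-balls that shrink at speed controlled by $\|\kappa_\p\|+c_0\delta/\Delta$ on their boundary, and one checks that a $W^\p$-ball of radius $\Delta$ sitting in $F^c(0)$ and touching $\partial E(0)$ remains, after time $t$, at $\po$-distance $\ge\Delta$ from $E(t)$ because $E(t)$ itself cannot advance into it faster than the $\psi_1$-driven motion allows — and the difference of the two speeds is exactly the term killed by $c_0\delta/\Delta$. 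I expect that once the right dimensional constant (the one in $\kappa_{\p}$ of a $\p$-Wulff shape of radius $\Delta$, namely $(N-1)/\Delta$ up to a factor, cf.\ the bound $[0,(N-1)/\e]$ used in Theorem~\ref{th:exist}) is identified, the choice $c_0=c_0(N)$ falls out, and the rest is a routine comparison. The approximation to nonsmooth mobilities is not needed here since $\p,\psi_1,\psi_2$ are assumed smooth and elliptic; it will be invoked only afterwards when passing to the limit.
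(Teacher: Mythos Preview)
Your primary strategy---showing that $w:=v+\Delta$ is a viscosity subsolution of the $\psi_1$-equation---has a genuine gap, and you have in fact put your finger on it. At a contact point $(\bar x,\bar t)$ where a smooth $\vp$ touches $w$ from above, you need
\[
(\psi_2-\psi_1)(\nabla\vp)\,\big(D^2\p(\nabla\vp):D^2\vp\big)\ \le\ \psi_2(\nabla\vp)\,c_0\tfrac{\delta}{\Delta},
\]
hence a two-sided bound $|D^2\p(\nabla\vp):D^2\vp|\le c_0/\Delta$. But $\vp$ is an \emph{arbitrary} test function touching the subsolution $v$ from above: its superjet Hessian can be made as positive as you wish, so $D^2\p(\nabla\vp):D^2\vp$ is unbounded from above. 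Semiconcavity of the distance (your appeal to Step~3 of Lemma~\ref{lem:visco}) gives only a one-sided bound, and an interior Wulff condition on one side likewise controls only one sign of the curvature. So the implication ``$v+\Delta$ is a subsolution of the $\psi_1$-equation'' simply fails in general, and the Lemma~\ref{lem:compgvisc} template---which works because the first-order equation there is Lipschitz in $\nabla u$ and the Lipschitz bound on $u$ controls that---does not transfer to the second-order situation.

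The paper obtains the missing two-sided bound by two devices you do not use. First, it replaces $E,F$ by the Minkowski enlargements $E'(t)=E(t)+W^\p(0,\Delta/4)$ and $F'(t)^c=F(t)^c+W^\p(0,\Delta/4)$; these remain super/subsolutions by translation invariance, and now every boundary point of $E'$ (resp.\ $(F')^c$) is touched from inside by a $W^\p$-ball of radius $\Delta/4$. Second, it runs the Crandall--Ishii doubling of variables on $(u',v')=(-\chi_{E'},-\chi_{F'})$: this produces matched jets $(p,X)$ and $(p,Y)$ at points $\bar x\in\partial E'(\bar t)$, $\bar y\in\partial F'(\bar s)$ with $X\ge Y$. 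The interior Wulff condition at $\bar x$ forces $D^2\p(p):X\le 4(N-1)/\Delta$, the one at $\bar y$ forces $D^2\p(p):Y\ge -4(N-1)/\Delta$, and $Y\le X$ sandwiches both between $\pm 4(N-1)/\Delta$. With this two-sided bound the $\delta\psi_2|D^2\p(p):X|$ error is absorbed by the shift $c_0\delta/\Delta$ for $c_0\ge 4(N-1)$, yielding a contradiction. Your Wulff-barrier heuristic is morally the geometric shadow of this argument, but without the thickening on \emph{both} sides and the jet matching $X\ge Y$ you cannot close it; if you want to salvage your route, those are the two ingredients to add.
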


\begin{proof}
We first assume that $\partial E(t),\partial F(t)$
are bounded for all $t$.

We shall use the fact that $u(x,t)=-\chi_{E}(x,t)$ is a viscosity supersolution of
\begin{equation}\label{eq:visco1}
\partial_t u = \psi_1(\nabla u)\Div \nabla \p(\nabla u),
\end{equation}
while $v(x,t)=-\chi_{F}(x,t)$ is a viscosity subsolution of (see Lemma~\ref{lem:visco})
\begin{equation}\label{eq:visco2}
\partial_t v = \psi_2(\nabla v)\left(\Div \nabla \p(\nabla v) - c_0 \frac{\delta}{\Delta}\right).
\end{equation}

A first remark is that since the equations are translationally invariant, then also
\[
u'(x,t) = \inf_{\po(z)\le \Delta/4} u(x+z,t)
\]
is a supersolution of~\eqref{eq:visco1}, and similarly,
\[
v'(x,t) = \sup_{\po(z)\le \Delta/4} v(x+z,t)
\]
is a subsolution of~\eqref{eq:visco2}.
Remark that $u'= - \chi_{E'}$ and $v'=-\chi_{F'}$, with the tubes $E'$,
$F'$ defined by
\[
E'(t) = E(t)+W^\p(0,\tfrac{\Delta}{4})\,, \qquad
\R^N\setminus F'(t) = (\R^N\setminus F(t)) + W^\p(0,\tfrac{\Delta}{4})
\]
until their respective extinction time. We denote $t^*$ the minimum
extinction time of these sets.
In particular, 
\[
\dist^{\po}\left(E'(0),\R^N\setminus F'(0)\right)= \frac{\Delta}{2}.
\]
Using \cite[Lemma~2.6]{CMNP17}, 
there is a time $t_0$ such that for $t\le t_0$,
\[
\dist^{\po}\left(E'(t),\R^N\setminus F'(t)\right)\ge \frac{\Delta}{4}.
\]

Let $\e>0$,
and consider a point $(\bar x,\bar t,\bar y,\bar s)$
(depending on $\e$) which reaches
\begin{equation}\label{eq:Me}
M_\e=\min_{\newatop{x,\, y \in \R^N}{0\le s,\,t <t_0}}
\frac{1}{\e}(1+u'(x,t)-v'(y,s)) + \frac{\po(x-y)}{2}^2 + \frac{(t-s)^2}{2\e}
+ \frac{\e}{t_0-t} + \frac{\e}{t_0-s}.
\end{equation}
Observe that for every fixed $x\in E'(0)$, $y\not\in F'(0)$ and $s=t=0$,
this quantity is less than 
\[
\frac{\po(x-y)}{2}^2 + 2\frac{\e}{t_0}
\]
and in particular, $M_\e\le \Delta^2/8+ 2\e/t_0$. If $\e$
is small enough, one must have $1+u'(\bar x,\bar t)-v'(\bar y,\bar s)=0$,
that is, $\bar x\in E'(\bar t)$ and $\bar y\not\in F'(\bar s)$,
hence
$\po(\bar x-\bar y)=\dist^{\po}(E'(\bar t),\R^N\setminus F'(\bar s))$.

If both $\bar t,\bar s>0$, then from~\cite[Thm.~3.2]{CIL}
(with $\e=1$, in their notation),
there exist $(N+1)\times (N+1)$ symmetric matrices
\begin{equation}\label{XY}
\tilde X = \begin{pmatrix} X & \zeta \\ \zeta^T & \zeta_0 \end{pmatrix},\qquad
\tilde Y = \begin{pmatrix} Y & \eta \\ \eta^T & \eta_0 \end{pmatrix}
\end{equation}
such that
\begin{equation}\label{eq:subsuperjet}
\begin{cases}
(\nabla\po(\bar y-\bar x),\frac{\bar s-\bar t}{\e} - \frac{\e}{(t_0-\bar t)^2},
\tilde X) \in \overline{J^{2,-}\frac{u'}\e}(\bar x,\bar t)\,,\\
(\nabla\po(\bar y-\bar x),\frac{\bar s-\bar t}{\e} + \frac{\e}{(t_0-\bar s)^2},\tilde Y) \in \overline{J^{2,+}\frac{v'}\e}(\bar y,\bar s)\,,
\end{cases} 
\end{equation}
and such that
\begin{equation}\label{eq:ishii}
-\left(
1+\|A\|\right) \Id
\le \begin{pmatrix}- \tilde X & 0 \\ 0 & \tilde Y \end{pmatrix}
\le A+
 A^2
\end{equation}
where
\[
A = \begin{pmatrix}
D^2\po(\bar x-\bar y) & 0 & -D^2\po(\bar x-\bar y)  & 0 \\
0 & \frac{1}{\e}-2\frac{\e}{(t_0-\bar t)^3} & 0 & -\frac{1}{\e} \\
-D^2\po(\bar x-\bar y) & 0 & D^2\po(\bar x-\bar y)  & 0 \\
0 & -\frac{1}{\e}& 0 & \frac{1}{\e} -2\frac{\e}{(t_0-\bar s)^3}
\end{pmatrix}.
\]
In particular, for all $\xi\in \R^N$, letting $\tilde \xi = (\xi,0,\xi,0)\in\R^{2N+2}$,
from \eqref{eq:ishii} and \eqref{XY} we get 
\[
-\xi^T X\xi + \xi^T Y\xi\le \tilde\xi^T A\tilde\xi + 
\tilde\xi^T A^2\tilde\xi = 0, 
\]
which gives the inequality
\begin{equation}\label{eq:XY}
X\ge Y.
\end{equation}

Recall that $u'/\e$ is a supersolution and $v'/\e$ is a subsolution. 
Thanks to
\eqref{eq:subsuperjet}, letting $p=\nabla\po(\bar y-\bar x)$
and $a=\frac{\bar s-\bar t}{\e} $, one has
\[
\begin{cases}
a - \frac{\e}{(t_0-\bar t)^2} \ge \psi_1(p)D^2\p(p):X\,,\\
a + \frac{\e}{(t_0-\bar s)^2} \le \psi_2(p)(D^2\p(p):Y - c_0\tfrac{\delta}{\Delta})\,,
\end{cases}
\]
yielding
\begin{equation}\label{eq:almostdone}
0<\frac{\e}{(t_0-\bar t)^2} +
\frac{\e}{(t_0-\bar s)^2}
\le 
\psi_2(p)(D^2\p(p):Y - c_0\tfrac{\delta}{\Delta}) - 
 \psi_1(p)D^2\p(p):X\,.
\end{equation}
Now, we observe that as $E'(\bar t)= E(\bar t)+W^\p(0,\Delta/4)$
and (necessarily) $\bar x\in \partial E'(\bar t)$, we find
that $(p,X)$ is also a subjet of $-\chi_{W^\p(x',\Delta/4)}$ for
some $x'\in E(\bar t)$ with $\po(\bar x-x')=\Delta/4$. In
particular, it follows that $D^2\p(p):X \le 4(N-1)/\Delta$.
In the same way, $D^2\p(p):Y \ge -4(N-1)/\Delta$ and using~\eqref{eq:XY},
we obtain
\begin{equation}\label{eq:XY2}
-4\frac{N-1}{\Delta} \le D^2\p(p):Y \le D^2\p(p):X \le 
4\frac{N-1}{\Delta}.
\end{equation}

Thanks to~\eqref{eq:psivicino} and~\eqref{eq:XY2},
\begin{multline*}
 -\psi_1(p)D^2\p(p):X\le  -\psi_2(p)D^2\p(p):X  + \delta\psi_2(p)|D^2\p(p):X|
\\
\le 
-\psi_2(p)D^2\p(p):X + 4(N-1)\frac{\delta}{\Delta}\psi_2(p),
\end{multline*}
So that~\eqref{eq:almostdone} and~\eqref{eq:XY} yield
\begin{multline*}
0<\psi_2(p)(D^2\p(p):Y - c_0\tfrac{\delta}{\Delta}) - 
 \psi_1(p)D^2\p(p):X
\\
=
\psi_2(p)(D^2\p(p):(Y-X) - c_0\tfrac{\delta}{\Delta}) + 
 (\psi_1(p) - \psi_2(p)) D^2\p(p):X
\\
\le \psi_2(p)\big(D^2\p(p):(Y-X) - (c_0-4(N-1))\tfrac{\delta}{\Delta}\big)
\le 0
\end{multline*}
as soon as $c_0\ge 4(N-1)$, yielding a contradiction.
\smallskip

We deduce that at least one of  $\bar t$ or $\bar s$
is zero; without loss of generality let us assume $\bar s=0$.
For any $t<t_0$, thanks to~\eqref{eq:Me} (choosing $s=t$),
if $\e$ is small enough one has
\[
 \frac{1}{2}\dist^{\po}(E'(t),\R^N\setminus F'(t))^2 
+ 2\frac{\e}{t_0-t}
\ge
\frac{1}{2}\dist^{\po}(E'(\bar t),\R^N\setminus F'(0))^2 
+ \frac{\bar t^2}{2\e}
+ \frac{\e}{t_0-\bar t} + \frac{\e}{t_0}
\]
from which we see, in particular, that $\bar t\to 0$ as $\e\to 0$.
Hence, in the limit $\e\to 0$, using also that $E$ is closed (see \cite[Remark~2.3]{CMNP17} for more details), we deduce
\begin{multline*}
 \frac{1}{2}\dist^{\po}(E'(t),\R^N\setminus F'(t))^2 \ge 
\liminf_{\bar t\to 0}\frac{1}{2}\dist^{\po}(E'(\bar t),\R^N\setminus F'(0))^2 
\\
\ge \frac{1}{2}\dist^{\po}(E'(0),\R^N\setminus F'(0))^2  = \frac{\Delta^2}{8}
\end{multline*}
which shows the thesis of the Lemma, until $t=t_0$ (thanks to
the continuity property~(b)). Starting again from $t_0$, we have
proven the Lemma for bounded sets (or sets with bounded boundary).
\smallskip

If $\partial E(0)$ or $\partial F(0)$ is unbounded, we proceed as
follows: we first consider, for $\e>0$,
the sets 
$$
E^\e_0:=E(0)+W^{\p}(0,\e), \qquad  F^\e_0=:       \R^N\setminus \big(    (\R^N\setminus F(0))+W^{\p}(0,\e) \big),
$$ 
which satisfy
 $\dist^{\po}(E^\e_0,\R^N\setminus F^\e_0)\ge \Delta-2\e$.

Then, for $R>0$, we consider the initial sets
$E^{\e,R}_0 =E^\e_0\cap B_R$ and 
and $F^{\e,R}_0=F^\e_0 \cap (B_R+W^{\p}(0,\Delta))$. The result holds for
the evolutions starting from these two sets, with the distance $\Delta-2\e$.
Hence in the limit $R\to \infty$,
it must hold for the (viscosity) evolutions starting from $E^\e_0$ and $F^\e_0$
(which are unique for almost all choice of $\e$).

By standard comparison results for discontinuous viscosity
solutions~\cite{Barles,BarlSouga98,BaSoSou},
it then follows that the superflow $E$ (which is also a viscosity superflow)
is contained in the evolution starting from $E^\e_0$, while
$F$ contains the evolution starting from $F^\e_0$ (the $\e$-regularization
has been introduced to avoid issues due to the possible non-uniqueness
of viscosity solutions).

We deduce that $\dist^{\po}(E(t),\R^N\setminus F(t))\ge \Delta-2\e$
for all $t$,
until extinction. Since this is true for any $\e>0$, the lemma is proven.
\end{proof}

\subsubsection{Comparison with a non-constant forcing term}



In this section we prove the  crucial stability estimate for motions corresponding to different but close mobilities. 
We start with the following:
\begin{proposition}\label{prop:stabsmooth}
Assume 
that $\p,\psi_1,\psi_2$
are smooth and elliptic, 
that  $\psi_1,\psi_2$  satisfy~\eqref{eq:psivicino}, and that
\begin{equation}\label{eq:compani0}
\psi_2(\xi) \le\beta\p(\xi) \qquad \text{ for all } \xi\in\R^N.
\end{equation}
 Let $E_0\subset F_0$ be a closed and an open set, respectively, such that
 $\dist^{\po}(E_0,\R^N\setminus F_0)=:\Delta>0$, and let  $E$, $F$ be  a closed and open ``tube''
in $\R^N\times [0,\infty)$, respectively, with
$E(0)=E_0$, $F(0)=F_0$, such  that $-\chi_E$ is a supersolution
of 
\begin{equation}\label{eq:evol1}
u_t = \psi_1(\nabla u)(\Div\nabla\p(\nabla u)+g)\,,
\end{equation}
and $-\chi_F$ is a subsolution of
\begin{equation}\label{eq:evol2}
u_t = \psi_2(\nabla u)(\Div\nabla\p(\nabla u)+g)\,.
\end{equation}
Then, there holds 
\begin{equation}\label{eq:viscousmobiestim}
\dist^{\po}(E(t),\R^N\setminus F(t))\ge 
\Delta e^{-\beta Mt } - \delta\frac{2c_0/\Delta + \|g\|_\infty}{M}(1-e^{-\beta Mt })
\end{equation}
as long as this quantity is larger than $\Delta/2$,
where $c_0$ is as in Lemma \ref{lem:compmobivisc} and $M$ is the Lipschitz constant of $g$ with respect to $\po$.
\end{proposition}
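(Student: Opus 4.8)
The plan is to combine the two preceding comparison estimates via a time-splitting (Lie–Trotter) argument, exactly as one does when decomposing the evolution \eqref{eq:evol1} into its ``curvature part'' $u_t=\psi_1(\nabla u)\Div\nabla\p(\nabla u)$ and its ``forcing part'' $u_t=\psi_1(\nabla u)g$. The curvature part is controlled by Lemma~\ref{lem:compmobivisc}: over a short time step, if two tubes are at $\po$-distance $\Delta$ apart, then the $\psi_1$-superflow and the $\psi_2$-subflow for the forcing-free equation (with the subflow's curvature shifted by $-c_0\delta/\Delta$) stay at distance $\ge\Delta$. The forcing part is controlled by Lemma~\ref{lem:compgvisc} applied with $\eta=\po$ (note \eqref{eq:compani0} gives precisely $\psi_2\le\beta\po$, the hypothesis \eqref{eq:controletopsi}), with $g_1=g$ and $g_2=g-c_0\delta/\Delta$, so that $c$ there equals $c_0\delta/\Delta+\|g\|_\infty$ roughly — more precisely one must absorb the artificial curvature shift $c_0\delta/\Delta$ into a constant forcing term and also account for $\|g\|_\infty$ in estimating the extra $\psi$-speed, giving the combined rate $\delta(2c_0/\Delta+\|g\|_\infty)$ in \eqref{eq:viscousmobiestim}. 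Iterating the two half-steps over a partition of $[0,t]$ and letting the mesh go to zero produces the differential inequality $\frac{d}{dt}\Delta(t)\ge -\beta M\Delta(t)-\delta(2c_0/\Delta+\|g\|_\infty)$ for the distance $\Delta(t):=\dist^{\po}(E(t),\R^N\setminus F(t))$, whose integration with $\Delta(0)=\Delta$ yields \eqref{eq:viscousmobiestim}, valid as long as $\Delta(t)\ge\Delta/2$ so that the $1/\Delta(t)$ term stays bounded by $2/\Delta$.

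Rather than literally splitting, a cleaner route — and the one I would actually write — is to run a single doubling-of-variables argument directly on the two full equations \eqref{eq:evol1}, \eqref{eq:evol2}, mirroring the proof of Lemma~\ref{lem:compmobivisc} but now carrying the forcing term and an exponential weight. Introduce, for small $\e>0$ and a constant $K$ to be chosen,
\[
M_\e=\min_{\substack{x,y\in\R^N\\0\le s,t<t_0}}\ \frac1\e\bigl(1+u(x,t)-v(y,s)\bigr)+\frac{e^{\beta Mt}}{2}\po(x-y)^2+\frac{(t-s)^2}{2\e}+\frac{\e}{t_0-t}+\frac{\e}{t_0-s}+K t,
\]
so that a negative value of $M_\e$ at $t=t_0$ would contradict the assertion. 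At an interior minimum point one applies the Crandall–Ishii lemma \cite{CIL} to obtain matrices $\tilde X\ge\tilde Y$ and subjet/superjet relations, uses \eqref{eq:evol1}–\eqref{eq:evol2} to get the two viscosity inequalities, and — exactly as in Lemma~\ref{lem:compmobivisc} — uses that the contact point $\bar x$ lies on the boundary of a $\po$-Wulff shape of radius $\sim\Delta(t)$ (because $\po(\bar x-\bar y)$ realizes the distance) to bound $D^2\p(p):X$ and $D^2\p(p):Y$ by $\pm(N-1)/\Delta(t)\le\pm 2(N-1)/\Delta$. Then \eqref{eq:psivicino} produces the error term $\delta\,\psi_2(p)\,|D^2\p(p):X|\le 4(N-1)\delta/\Delta\cdot\psi_2(p)$, the Lipschitz-in-$x$ bound on $g$ (constant $M$ in the $\po$-metric) together with \eqref{eq:compani0} produces the term $\beta M\po(x-y)$ that is absorbed by the exponential weight $e^{\beta Mt}$, and $\|g\|_\infty$ together with \eqref{eq:compani0} contributes $\delta\|g\|_\infty\psi_2(p)/\psi_2(p)=\delta\|g\|_\infty$-type error; choosing $K$ a bit larger than $\delta(2c_0/\Delta+\|g\|_\infty)$ times the weight closes the argument.

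The unbounded case is handled exactly as at the end of the proof of Lemma~\ref{lem:compmobivisc}: regularize $E_0,F_0$ by $W^\p(0,\e)$, intersect with large balls $B_R$, apply the bounded result, and pass to the limits $R\to\infty$ then $\e\to0$ using standard comparison for discontinuous viscosity solutions \cite{Barles,BarlSouga98,BaSoSou}. \textbf{The main obstacle} is bookkeeping in the doubling argument: one must make sure the exponential weight $e^{\beta Mt}$ is inserted in the right place to cancel the $\beta M$ coming from the Lipschitz dependence of $g$ without spoiling the curvature estimate, and that the three distinct error sources (the $c_0\delta/\Delta$ from comparing mobilities à la Lemma~\ref{lem:compmobivisc}, an extra $c_0\delta/\Delta$ hidden in how the forcing interacts with the curvature shift, and the $\delta\|g\|_\infty$ from \eqref{eq:psivicino} applied to the forcing term) combine into exactly the coefficient $\delta(2c_0/\Delta+\|g\|_\infty)$ appearing in \eqref{eq:viscousmobiestim}. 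The restriction ``as long as this quantity is $\ge\Delta/2$'' is precisely what keeps $1/\Delta(t)\le 2/\Delta$ so that the curvature bound stays uniform; past that point one would restart the estimate, but the statement only claims validity up to that threshold.
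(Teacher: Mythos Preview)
Your first paragraph correctly captures the paper's proof: the authors carry out the time-splitting \emph{literally}, building for each mesh $\e>0$ approximate flows $u^\e,v^\e$ that alternate on half-intervals between the pure-curvature equation (handled by Lemma~\ref{lem:compmobivisc}) and the pure-forcing equation (handled by Lemma~\ref{lem:compgvisc} with $\eta=\po$ and mobility $2\psi_2$, after first replacing $\psi_1$ by $\psi_2$ at the cost of an extra $\delta\|g\|_\infty$ in the forcing), iterating to get the recursion $d_{j+1}\ge(d_j-c/M)e^{-2\beta M\e}+c/M$ with $c=-\delta(2c_0/\Delta+\|g\|_\infty)$, and passing $\e\to 0$ via Barles--Souganidis~\cite{BarlesSouganidis91,BarlesSplitting}. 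The $s$-perturbation of the initial data (your third paragraph) is used already here, to guarantee uniqueness of the limiting viscosity solutions starting from $E_0^s,F_0^s$.

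Your ``cleaner'' single-doubling alternative has a gap at the curvature-bound step. The claim that $|D^2\p(p):X|\le C/\Delta(t)$ follows ``because $\po(\bar x-\bar y)$ realizes the distance'' is incorrect: realizing the distance constrains the first-order jet but gives no control on the second-order jet $X$ of $-\chi_E$ at $\bar x$, since $\partial E(\bar t)$ may have arbitrarily large curvature there. In Lemma~\ref{lem:compmobivisc} that bound comes from the \emph{inflation} trick---one works with $E'=E+W^\p(0,\Delta/4)$ so that any subjet of $-\chi_{E'}$ at a boundary point is automatically a subjet of the indicator of a Wulff ball of radius $\Delta/4$, whence $D^2\p(p):X\le 4(N-1)/\Delta$---which you omit from your functional $M_\e$. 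One can alternatively extract a bound of the right order directly from the Crandall--Ishii matrix inequality (using that $D^2[\po^2/2]$ is $0$-homogeneous and $D^2\p$ is $(-1)$-homogeneous), but the constant then depends on $\p,\po$ rather than on $N$ alone, so it would not reproduce the specific $c_0$ of Lemma~\ref{lem:compmobivisc} that the statement invokes. If you want the direct route to succeed with the stated constant, you must fold the inflation into the doubling argument---at which point it is no longer obviously cleaner than the paper's splitting.
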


\begin{proof} 
In order to obtain the estimate, we combine the results
of Lemmas~\ref{lem:compmobivisc} and~\ref{lem:compgvisc}
(with $\eta=\po$),
together with a splitting result which follows from~\cite{BarlesSouganidis91}
(\textit{cf}~Example~1, see also~\cite{BarlesSplitting}).

As before, we may need to slightly perturb the initial sets,
considering rather $E^s_0 = E_0+W^{\p}(0,s)$ and
$F^s_0 = \R^N\setminus ( \R^N\setminus F_0 +W^{\p}(0,s))$, for a small
$s$ (which eventually will go to $0$).

Given $s>0$ small, we start with building,
for $\e>0$ given, the motions $u^\e(x,t)$, $v^\e(x,t)$ defined
as follows:
we let $u^\e(x,0)=-\chi_{E^s_0}$ and define recursively $u^\e$
for $j\ge 0$ as a viscosity solution of:
\[
\begin{cases}
\displaystyle
u^\e_t = 2\psi_1(\nabla u^\e)\Div \nabla \p(\nabla v^\e)& 2j\e< t\le 2j\e+\e,\\[2mm]
\displaystyle
u^\e_t = 2\psi_1(\nabla u^\e)\fint_{2j\e}^{2(j+1)\e} g(x,s)ds & 2j\e+\e < t \le 2(j+1)\e.
\end{cases}
\]
(In case of nonuniqueness, we select for instance
the smallest (super)solution,
corresponding to the largest set $E^\e(t)=\{u^\e=-1\}$.)
Similarly, we let $v^\e(x,0)=-\chi_{F^s_0}$ and let $v^\e(x,t)$
be the largest (sub)solution of:
\[
\begin{cases}
\displaystyle
v^\e_t = 2\psi_2(\nabla v^\e)\left(\Div \nabla \p(\nabla v^\e)
- 2c_0\tfrac{\delta}{\Delta}\right)
& 2j\e< t\le 2j\e+\e,\\[2mm]
\displaystyle
v^\e_t = 2\psi_2(\nabla v^\e)
\Big(\fint_{2j\e}^{2(j+1)\e} g(x,s)ds+2c_0\tfrac{\delta}{\Delta}\Big)
 & 2j\e+\e < t \le 2(j+1)\e,
\end{cases}
\]
where $c_0$ is as in Lemma~\ref{lem:compmobivisc}.
Thanks to~\cite{BarlesSouganidis91,BarlesSplitting}, as $\e\to 0$
each of these functions converges to the viscosity solution of
\eqref{eq:evol1} and~\eqref{eq:evol2}, respectively, 
starting from $-\chi_{E^\e_0}$ and $-\chi_{F^\e_0}$,
provided these solutions are uniquely defined, which is known
to be true for almost all $\e$ (in fact all but a countable set of values),
in which case it is also known that they are (opposite of) characteristic
functions.

We now show that we can estimate the distance between the corresponding
geometric evolutions, using  Lemmas~\ref{lem:compmobivisc}  and~\ref{lem:compgvisc}.

Let $\delta$ be as in \eqref{eq:psivicino}. 
A first observation is that for $j\ge 0$, if we consider on the
interval $[2j\e+\e,2(j+1)\e]$ the smallest solution 
$\tilde{u}^\e(x,t)$ of
\[
\tilde{u}^\e_t = 2\psi_2(\nabla \tilde{u}^\e)
\Big(\fint_{2j\e}^{2(j+1)\e} g(x,s)ds - \delta\|g\|_\infty\Big)\,,\quad
\tilde{u}^\e(\cdot,2j\e+\e)=u^\e(\cdot,2j\e+\e)
\]
then, since for any $p\in\R^N$, 
\[
\psi_1(p)\fint_{2j\e}^{2(j+1)\e} g(x,s)ds
\ge 
\psi_2(p)\fint_{2j\e}^{2(j+1)\e} g(x,s)ds - \delta\psi_2(p)\|g\|_\infty
\]
one has $\tilde{u}^\e(x,t)\le u^\e(x,t)$ for $2j\e+\e\le t\le 2(j+1)\e$,
and thus $E^\e(t)\subseteq \{\tilde{u}^\e(\cdot,t)=-1\}$.
Hence, Lemma~\ref{lem:compgvisc} yields that for $2j\e+\e\le t\le 2(j+1)\e$,
\begin{multline*}
 \dist^{\po}(E^\e(t),\R^N\setminus F^\e(t))
\ge 
\dist^{\po}( \{\tilde{u}^\e(\cdot,t)=-1\},\R^N\setminus F^\e(t)\})
\\\ge 
 \left(\dist^{\po}(E^\e(2j\e+\e),\R^N\setminus F^\e(2j\e+\e))
 - \frac{c}{M}\right)e^{-2\beta M (t-2j\e-\e)} + \frac{c}{M},
\end{multline*}
{ for $c=- \delta (2c_0/\Delta + \|g\|_\infty)$.} Note that here
we use the fact that the mobility $2\psi_2$ satisfies $2\psi_2\le
2\beta\p$, \textit{cf}~\eqref{eq:compani0}.

On the other hand, Lemma~\ref{lem:compmobivisc} yields that
for all $j\ge 0$
and $2j\e \le t\le 2j\e+\e$, then
\[
\dist^{\po}(E^\e(t),\R^N\setminus F^\e(t))
\ge 
\dist^{\po}(E^\e(2j\e),\R^N\setminus F^\e(2j\e))
\]
as long as $\dist^{\po}(E^\e(2j\e),\R^N\setminus F^\e(2j\e))\ge \Delta/2$.

In particular, denoting $d_j=\dist^{\po}(E^\e(2j\e),\R^N\setminus F^\e(2j\e))$,
one obtains by induction that
\[ d_{j+1} \ge (d_j - \tfrac{c}{M})e^{-2\beta M\e} + \tfrac{c}{M}
\ge (d_0 - \tfrac{c}{M})e^{-2\beta M (j+1)\e} + \tfrac{c}{M},
\]
as long as $d_j\ge \Delta/2$. In the limit, we find that,
letting $E^s(t) = \{u(\cdot, t)=-1\}$ and $F^s(t) = \{v(\cdot,t)=-1\}$
and recalling that $\dist^{\po}(E^s_0,\R^N\setminus F^s_0)\ge \Delta-2s$,
\begin{equation*}
\dist^{\po}(E^s(t),\R^N\setminus F^s(t))\ge 
(\Delta-2s) e^{-\beta Mt } - \delta\frac{2c_0/\Delta + \|g\|_\infty}{M}(1-e^{-\beta Mt })
\end{equation*}
as long as this quantity is larger than $\Delta/2$.

By comparison, it is clear that $E\subset E^s$ and $F^s\subset F$,
hence (letting eventually $s\to 0$), we deduce that \eqref{eq:viscousmobiestim} holds as long as 
the right-hand side is larger than $\Delta/2$.
\end{proof}

We are now ready to state and prove the main result of the section.
\begin{theorem}\label{T7}
Let $\psi_1,\psi_2$ and $\p$ satisfy~\eqref{eq:psivicino} and~\eqref{eq:compani0}. Assume also that   $\psi_1,\psi_2$ are $\p$-regular in the sense of Definition~\ref{def:phiregular}.
Let the forcing term  $g(x,t)$ be  continuous, bounded, and
spatially $M$-Lipschitz continuous with respect to the distance $\po$, and
denote by $E$  a superflow for $V=-\psi_1(\nu)(\kappa_\p+g)$
and by $F$ be a subflow for $V=-\psi_2(\nu)(\kappa_\p+g)$, both
in the sense of Definition~\ref{Defsol}. Finally, assume  that
$\dist^{\po}(E(0),\R^N\setminus F(0))\ge \Delta>0$. Then for all $t$,
\begin{equation}\label{eq:mobiestim}
\dist^{\po}(E(t),\R^N\setminus F(t))\ge 
\Delta e^{-\beta Mt } - \delta\frac{2c_0/\Delta + \|g\|_\infty}{M}(1-e^{- M \beta t })
\end{equation}
as long as this quantity is larger than $\Delta/2$.
\end{theorem}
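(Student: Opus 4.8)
The plan is to reduce Theorem~\ref{T7}, which concerns possibly crystalline and non-smooth mobilities, to the smooth case already treated in Proposition~\ref{prop:stabsmooth} by an approximation argument. First I would approximate the anisotropy $\p$ and the mobilities $\psi_1,\psi_2$ by smooth and elliptic norms. Since $\psi_1,\psi_2$ are $\p$-regular, by the characterization recalled after Definition~\ref{def:phiregular} we may write $\psi_i(\nu)=\psi_{i,0}(\nu)+\e_0\p(\nu)$ with $\psi_{i,0}$ convex; replacing $\p$ by a smooth elliptic $\p_k\to\p$ and $\psi_{i,0}$ by a smooth elliptic approximation $\psi_{i,0}^k$, the norms $\psi_i^k:=\psi_{i,0}^k+\e_0\p_k$ are then smooth, elliptic, $\p_k$-regular (uniformly in $k$), and converge to $\psi_i$. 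I would also smooth and mollify the forcing term $g$ into $g_k$, keeping the spatial $M$-Lipschitz bound (with respect to $\po_k\to\po$) and the uniform sup bound, and arrange $g_k\to g$ locally uniformly. A minor point to check is that $\psi_1^k,\psi_2^k$ still satisfy a condition of the form \eqref{eq:psivicino} with a constant $\delta_k\to\delta$ and \eqref{eq:compani0} with $\beta_k\to\beta$; this is routine since these are closed inequalities stable under uniform convergence of norms.

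Next I would use Theorem~\ref{th:exist} (the stability/convergence theorem for level set solutions) to obtain the smooth approximating flows. Given the superflow $E$ and subflow $F$ in the sense of Definition~\ref{Defsol}, I would represent them (up to a harmless $s$-perturbation of the initial data, $E_0^s=E_0+W^{\p}(0,s)$ and $F_0^s=\R^N\setminus((\R^N\setminus F_0)+W^{\p}(0,s))$, as in the proof of Proposition~\ref{prop:stabsmooth}) as sublevel sets of a level set solution with the appropriate mobility; then take the smooth-anisotropy level set solutions $u_k^{(i)}$ with data $\psi_i^k,\p_k,g_k$ and the given initial level set function. By Theorem~\ref{th:exist}, these converge locally uniformly, and hence the corresponding sublevel sets $E^k$ and $F^k$ converge in the Kuratowski sense to $E$ and $F$ (for a.e.\ level). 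For each fixed $k$, $E^k$ is a viscosity supersolution of \eqref{eq:evol1} with $\psi_1^k,\p_k,g_k$ and $F^k$ a viscosity subsolution of \eqref{eq:evol2} with $\psi_2^k,\p_k,g_k$, so Proposition~\ref{prop:stabsmooth} applies and gives
\[
\dist^{\po_k}(E^k(t),\R^N\setminus F^k(t))\ge
(\Delta-2s) e^{-\beta_k M t } - \delta_k\frac{2c_0/(\Delta-2s) + \|g_k\|_\infty}{M}(1-e^{-\beta_k M t })
\]
as long as the right-hand side exceeds $(\Delta-2s)/2$, with $c_0$ the dimensional constant from Lemma~\ref{lem:compmobivisc}.

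Finally I would pass to the limit $k\to\infty$ and then $s\to0$. Since $\po_k\to\po$ uniformly on the unit sphere, the anisotropic distances $\dist^{\po_k}(\cdot,\cdot)$ converge to $\dist^{\po}(\cdot,\cdot)$; combined with the Kuratowski convergence $E^k(t)\to E(t)$, $F^k(t)\to F(t)$ and lower semicontinuity of the distance between a set and the complement of another under Kuratowski convergence, the left-hand side passes to the limit (with an inequality in the right direction), while $\beta_k\to\beta$, $\delta_k\to\delta$, $\|g_k\|_\infty\le\sup_k\|g_k\|_\infty$ controlled by $\|g\|_\infty$ make the right-hand side converge to the desired expression with $\Delta-2s$ in place of $\Delta$. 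Letting $s\to0$ yields \eqref{eq:mobiestim}. The main obstacle is not any single estimate but the bookkeeping in the approximation: one must make sure the smoothing of $\p,\psi_1,\psi_2$ can be done \emph{simultaneously} so that the $\p_k$-regularity of $\psi_i^k$ holds with an $\e_0$ independent of $k$ (so that Theorem~\ref{th:exist} is applicable and the constants in Proposition~\ref{prop:stabsmooth} stay uniform), and that the threshold condition ``as long as the quantity is $\ge\Delta/2$'' is preserved in the limit --- this is where one uses that the bound is only claimed on the (relatively open) time interval where the right-hand side stays above $\Delta/2$, together with the left-continuity property~(b) of Definition~\ref{Defsol} to handle the endpoint, exactly as in the bounded-set step of Lemma~\ref{lem:compmobivisc}.
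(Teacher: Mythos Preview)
Your overall strategy---approximate $\p,\psi_1,\psi_2$ by smooth elliptic norms keeping uniform $\p_k$-regularity, apply Proposition~\ref{prop:stabsmooth} at the smooth level, then pass to the limit---is exactly the one the paper uses. The construction of the approximating norms and the bookkeeping on $\delta_k,\beta_k,\e_0$ are fine (the paper dispatches this in one sentence). Mollifying $g$ is unnecessary since $g$ is already assumed continuous, but harmless.

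There is, however, a genuine gap in the step where you ``represent $E$ and $F$ as sublevel sets of a level set solution'' and then claim Kuratowski convergence $E^k(t)\to E(t)$, $F^k(t)\to F(t)$. The sets $E$ and $F$ are an \emph{arbitrary} superflow and subflow in the sense of Definition~\ref{Defsol}; they need not coincide with any sublevel set of the (unique) level set solution for the given mobilities. What Theorem~\ref{th:exist} produces in the limit is that unique distributional solution, not the given $E$ or $F$. So the convergence you assert is unjustified, and without it the passage to the limit in the distance inequality breaks down.

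The paper repairs this by inserting a comparison step. From the perturbed initial data $E_0^s\supset E_0$ one builds, via Theorem~\ref{th:exist}, both a subflow $A_n^s$ and a superflow $B_n^s$ for the smooth equation with mobility $\psi_1^n$, and similarly $A_n'^s\subset B_n'^s$ starting from $F_0^s$. Proposition~\ref{prop:stabsmooth} is applied to the pair $(B_n^s,A_n'^s)$. As $n\to\infty$, $\R^N\setminus A_n^s$ converges (Kuratowski) to the complement of a subflow for the limiting equation with mobility $\psi_1$, and now Theorem~\ref{thm:uniq} (the comparison principle for distributional sub/superflows) gives $E\subset$ this limit; symmetrically the limit of $B_n'^s$ is a superflow contained in $F$. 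Hence $\dist^{\po}(E(t),\R^N\setminus F(t))$ is bounded below by the distance between the limiting approximants, and \eqref{eq:mobiestim} follows after $s\to 0$. The missing ingredient in your argument is precisely this use of Theorem~\ref{thm:uniq} to pass from the approximating solutions to the \emph{given} super/subflows.
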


\begin{proof}  Consider smooth, elliptic approximations
of $\psi_i$ ($i=1,2$), $\p$, denoted $\psi_i^n$, $\p^n$, such
that~\eqref{eq:psivicino}-\eqref{eq:compani0} hold also for $\psi_i^n$, $\p^n$
 (with  slightly larger constants $\delta$ and $\beta$ that, with a small abuse of notation, will not be relabeled)
and with $\psi_i^n-\e \p^n$ convex ($i=1,2$), that is,
$\psi_i^n$ are uniformly
$\p^n$-regular (see statement of \textit{cf}~Theorem~\ref{th:exist}).

Consider as before, for $s>0$ small, the initial sets $E_0^s:=E_0+W^{\p^n}(0,s)$
and $F_0^s:=\R^N\setminus [ (\R^N\setminus F_0) +W^{\p^n}(0,s)]$. As in Theorem~\ref{th:exist} we can  build    subflows $A_n^s$ and superflows $B_n^s$ for the evolution
$V=-\psi^n_1(\nu)(\kappa_{\p^n}+g)$ both starting from $E_0^s$, such that  $A_n^s\subset B_n^s$, and 
a subflow $A_n'^{s}$ and superflow $B_n'^s$ for the evolution
$V=-\psi^n_2(\nu)(\kappa_{\p^n}+g)$ both starting from $F_0^s$, such that  $A_n'^s\subset B_n'^s$.
Thanks to Lemma~\ref{lem:visco},
$-\chi_{B_n^s}$ is a viscosity supersolution and $-\chi_{A_n'^s}$ is a viscosity
subsolution, so that we can apply Proposition~\ref{prop:stabsmooth} and estimate their $(\p^n)^\circ$-distance according to~\eqref{eq:viscousmobiestim}.

Againg thanks to Theorem~\ref{th:exist},
 $\R^N\setminus A_n^s$  converges in the
Kuratowski sense as $n\to \infty$ to the complement of a subflow, which contains $E$
thanks to Theorem~\ref{thm:uniq}, and analogously 
 $B_n'^s$ converges to a superflow contained in $F$.
We deduce~\eqref{eq:mobiestim} letting $s\to 0$.
\end{proof}

\subsection{ Existence and uniqueness by approximation}\label{sec:ex}

We recall that  the existence theory  for level set flows (in the sense of Definition~\ref{deflevelset1}) that we have so far works only for $\phi$-regular mobilities. The goal of  this section is to extend the existence theory to general mobilities. To this aim, we consider the following notion of {\em solution via approximation}: 
 
\begin{definition}[Level set flows via approximation]\label{deflevelset2}
 Let $\psi$, $g$, and $u^0$ be a  mobility,  an admissible forcing term, and a uniformly continuous function on $\R^N$, respectively. 
 
 We will say that a  continuous function $u^\psi:\R^N\times [0, +\infty)\to \R$ is a {\em solution via approximation} to the level set flow corresponding to \eqref{oee}, with initial datum $u^0$,  if $u^\psi(\cdot, 0)=u^0$ and if there exists  a sequence $\{\psi_n\}$ of $\phi$-regular mobilities  such that $\psi_n\to\psi$  and, denoting by $u^{\psi_n}$ the unique solution to \eqref{oee} (in the sense od Definition~\ref{deflevelset1}) with mobility $\psi_n$ and initial datum $u^0$, we have $u^{\psi_n}\to u^\psi$   locally uniformly in $\R^N\times[0,+\infty)$.
 \end{definition}

The next theorem is the main result of this section: it shows that for any mobility $\psi$, a solution-via-approximation $u^\psi$ in the sense of the previous definition always exists; such a solution  is also unique in that it is   independent of the choice of the approximating sequence of $\p$-regular mobilities $\{\psi_n\}$.
In particular, in the case of a $\p$-regular mobility, the notion of solution via approximation is consistent with that of Definition~\ref{deflevelset1}. 
\begin{theorem}\label{th:maingenmob}
Let $\psi$, $g$, and $u^0$ be as in Definition~\ref{deflevelset2}. Then, there exists a unique solution $u^\psi$ in the sense of Definition~\ref{deflevelset2} with initial datum $u^0$. 
\end{theorem}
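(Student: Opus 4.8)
The plan is to run the approximation scheme announced in the introduction. Since the notion of level set solution (Definition~\ref{deflevelset1}) is invariant under an increasing continuous relabelling of the values, composing with $\arctan$ --- which does not affect local uniform convergence of the $u^{\psi_n}$ --- we may assume $u^0\in BUC(\R^N)$; we also assume for simplicity that $g$ is continuous, the merely bounded case being recovered by the additional weak-$*$ approximation $g_n\to g$ used in Theorem~\ref{th:exist} together with the forcing-term comparison of Section~\ref{sec:forcing}. Choose $\psi_n:=\psi+\tfrac1n\p$; these are $\p$-regular (with $\e_0=1/n$ in Definition~\ref{def:phiregular}) and converge to $\psi$ uniformly on the unit sphere, for each $n$ the level set solution $u^{\psi_n}$ with mobility $\psi_n$ and datum $u^0$ is well defined by the theory for $\p$-regular mobilities (Theorem~\ref{th:exist}) and unique by Theorem~\ref{th:lscomp}, and comparison with the constant solutions $\sup u^0$ and $\inf u^0$ gives $\inf u^0\le u^{\psi_n}\le\sup u^0$. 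I will show that $(u^{\psi_n})_n$ is uniformly Cauchy on $\R^N\times[0,T]$ for every $T$ and that any other sequence of $\p$-regular mobilities converging to $\psi$ produces the same limit; since a uniform limit of continuous functions is continuous and still equals $u^0$ at $t=0$, this simultaneously gives a solution via approximation $u^\psi$ in the sense of Definition~\ref{deflevelset2} and its independence of the approximating sequence, that is, uniqueness (and, taking the constant sequence $\psi_n\equiv\psi$ when $\psi$ is already $\p$-regular, the consistency with Definition~\ref{deflevelset1}). Fix once and for all $\beta>0$ with $\psi\le(\beta-1)\p$, so that $\psi_n\le\beta\p$ for all $n$ and, eventually, for any sequence converging to $\psi$. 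Finally, applying Theorem~\ref{T7} with $\psi_1=\psi_2=\psi_n$ (so $\delta=0$) to $E=\{u^{\psi_n}\le\lambda\}$ and $F=\{u^{\psi_n}<\lambda'\}$, $\lambda<\lambda'$, and iterating over short time intervals to remove the restriction (inherent in Theorem~\ref{T7}) that the distance stay above half its initial value --- there being no error term to accumulate --- one obtains that $u^{\psi_n}(\cdot,t)$ has, for $0\le t\le T$, modulus of continuity $\omega_T(r):=\omega\bigl(r\,e^{\beta MT}\bigr)$ with respect to $\po$, uniformly in $n$, where $\omega$ is the modulus of continuity of $u^0$.

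Now the core estimate. Let $(\psi_n)$, $(\tilde\psi_m)$ be two sequences of $\p$-regular mobilities converging to $\psi$ and let $\delta_{n,m}\to0$ be such that $(1-\delta_{n,m})\tilde\psi_m\le\psi_n\le(1+\delta_{n,m})\tilde\psi_m$ and $(1-\delta_{n,m})\psi_n\le\tilde\psi_m\le(1+\delta_{n,m})\psi_n$. Fix $T>0$ and $\eta\in\bigl(0,\sup u^0-\inf u^0\bigr)$; set $\Delta:=\omega_T^{-1}(\eta)>0$, $\tau:=\ln(4/3)/(\beta M)$, and
\[
h(t):=\sup_{x\in\R^N}\bigl|u^{\psi_n}(x,t)-u^{\tilde\psi_m}(x,t)\bigr|,
\]
so $h(0)=0$. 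Assume $h(t_0)<\infty$; note $u^{\psi_n}(\cdot,t_0)\le u^{\tilde\psi_m}(\cdot,t_0)+h(t_0)$ by definition of $h$. For a.e.\ $\mu\in\R$, the restriction to $[t_0,\infty)$ of $E:=\{u^{\tilde\psi_m}\le\mu\}$ is a superflow for $V=-\tilde\psi_m(\nu)(\kappa_\p+g)$ and that of $F:=\{u^{\psi_n}<\mu+h(t_0)+\eta\}$ a subflow for $V=-\psi_n(\nu)(\kappa_\p+g)$, in the sense of Definition~\ref{Defsol}; at $t=t_0$ one has $E(t_0)\subseteq F(t_0)$, and $\{u^{\psi_n}(\cdot,t_0)\ge\mu+h(t_0)+\eta\}\subseteq\{u^{\tilde\psi_m}(\cdot,t_0)\ge\mu+\eta\}$, whence, by the uniform spatial modulus,
\begin{multline*}
\dist^{\po}\!\bigl(E(t_0),\,\R^N\setminus F(t_0)\bigr) \\
\ge\ \dist^{\po}\!\bigl(\{u^{\tilde\psi_m}(\cdot,t_0)\le\mu\},\,\{u^{\tilde\psi_m}(\cdot,t_0)\ge\mu+\eta\}\bigr)\ \ge\ \Delta .
\end{multline*}
By Theorem~\ref{T7}, for $t\in[t_0,t_0+\tau]$,
\begin{multline*}
\dist^{\po}\!\bigl(E(t),\,\R^N\setminus F(t)\bigr)\ \ge\ \Delta\, e^{-\beta M(t-t_0)}-\delta_{n,m}\,\frac{2c_0+\Delta\|g\|_\infty}{M\Delta}\,\bigl(1-e^{-\beta M(t-t_0)}\bigr) \\
\ge\ \tfrac34\Delta-\delta_{n,m}\,\frac{2c_0+\Delta\|g\|_\infty}{M\Delta}\ >\ \tfrac12\Delta
\end{multline*}
whenever $\delta_{n,m}\le\rho:=M\Delta^2\big/\bigl(8(2c_0+\Delta\|g\|_\infty)\bigr)$. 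Hence $E(t)$ and $\R^N\setminus F(t)$ are disjoint; equivalently $u^{\psi_n}(x,t)\le u^{\tilde\psi_m}(x,t)+h(t_0)+\eta$ for a.e.\ $\mu$, and then, letting $\mu$ decrease to $u^{\tilde\psi_m}(x,t)$ through admissible levels, for every $x$. The symmetric choice yields the reverse inequality, so $h(t)\le h(t_0)+\eta$ on $[t_0,t_0+\tau]$; in particular $h$ remains finite.

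The lower bound $\Delta=\omega_T^{-1}(\eta)$ obtained above is independent of $t_0$, hence so is the threshold $\rho=\rho(T,\eta)$; iterating over $c_T:=\lceil T/\tau\rceil$ consecutive slabs, starting from $h(0)=0$, we conclude that $\delta_{n,m}\le\rho(T,\eta)$ implies $\sup_{\R^N\times[0,T]}\bigl|u^{\psi_n}-u^{\tilde\psi_m}\bigr|\le c_T\,\eta$. Since $\delta_{n,m}\to0$ and $c_T$ does not depend on $\eta$, letting first $n,m\to\infty$ and then $\eta\to0$ gives $\sup_{\R^N\times[0,T]}|u^{\psi_n}-u^{\tilde\psi_m}|\to0$ for every $T$. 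Taking $(\tilde\psi_m)=(\psi_m)$, this says $(u^{\psi_n})$ is uniformly Cauchy on every $\R^N\times[0,T]$, hence converges locally uniformly to a continuous $u^\psi$ with $u^\psi(\cdot,0)=u^0$, which is therefore a solution via approximation; taking $(\tilde\psi_m)$ to be any other admissible approximating sequence shows that the limit is the same, which is the asserted uniqueness.

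I expect the core estimate to be the main obstacle. Theorem~\ref{T7} controls only the $\po$-distance between two prescribed sublevel sets, and only while that distance stays above half its initial value; converting this into an $L^\infty$-closeness of the level set functions requires scanning all levels $\mu$ (with the routine but fussy bookkeeping of the null set of levels at which the sublevel sets fail to be sub/superflows) and patching short-time estimates so that the resulting modulus of closeness between $u^{\psi_n}$ and $u^{\tilde\psi_m}$ still vanishes as $\delta_{n,m}\to0$ \emph{uniformly} on $[0,T]$ --- which is exactly what fixes the slab length $\tau\sim1/(\beta M)$ and the dependence of $\rho$ on $\eta$ (through $\Delta=\omega_T^{-1}(\eta)$) and on $T$. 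The remaining ingredients --- the reparametrization reduction, the uniform spatial modulus, and the transfer of the sub/superflow properties across the restarts --- are, in substance, already present in the proofs of Theorems~\ref{th:exist} and~\ref{T7}.
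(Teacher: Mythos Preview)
Your proposal is correct and follows essentially the same strategy as the paper: apply the stability estimate of Theorem~\ref{T7} to the sublevel sets of $u^{\psi_n}$ and $u^{\psi_m}$ on a short time interval (of length $\sim 1/(\beta M)$, fixed so that the exponential factor stays above $3/4$), convert the resulting set-distance bound into an $L^\infty$ bound by scanning over levels, and iterate. The paper's Step~1/Step~2 structure and your $h(t)$--recursion are the same argument; your version is slightly more streamlined in that you fix once and for all the uniform spatial modulus $\omega_T$ on $[0,T]$ (rather than updating it at each restart) and you treat existence and uniqueness in a single pass by comparing two arbitrary approximating sequences $(\psi_n)$, $(\tilde\psi_m)$ from the outset.
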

\begin{proof}
We have to prove that for any sequence  $\{\psi_n\}$ of $\phi$-regular mobilities  such that $\psi_n\to\psi$, the corresponding solutions $u^{\psi_n}$  to \eqref{oee} with initial datum $u^0$ converge to some function $u$  locally uniformly  in $\R^N\times[0,+\infty)$.
We split the proof of the theorem into two steps.

{\it Step 1.} 
Let $\beta$ be as in \eqref{eq:compani0}. Let $T_0>0$ be defined by $e^{-2\beta M T_0} = \frac34$, where as usual $M$ is the spatial Lipschitz constant of the forcing term $g$ with respect to the 
distance induced by $\po$.
We claim that for every $\e>0$ there exists $\bar n \in \N$  such that 
\beq\label{step1eq}
\|u^{\psi_n}-u^{\psi_m}\|_{L^{\infty}(\R^N\times[0,T_0])}\leq \e \qquad \text{ for all } n,  m\ge \bar n.
\eeq
To this aim, 
we observe that since  $\psi_n\to \psi$, for $n$ large enough
\beq\label{psihat}
\psi_n(\xi) \leq 2 \beta \phi(\xi) \text{ for all } \xi \in \R^N, 
\eeq 
and  there exists $\delta_j\to 0$ such that
\beq\label{psihat2}
(1-\delta_j) \psi_n \leq \psi_m \leq (1+\delta_j) \psi_n \qquad \text{ for all } m, n \ge j.
\end{equation}
Set 
$E^{\psi_n}_\lambda(t):= \{u^{\psi_n}(\cdot, t) \leq \lambda\}$, $F^{\psi_n}_\lambda(t):= 
\{u^{\psi_n}(\cdot, t) < \lambda\}$ and recall that  $E^{\psi_n}_\lambda$ is a superflow, while $F^{\psi_n}_\lambda$ is a subflow in the sense of Definition  \ref{Defsol}.

Let $\omega$ be a modulus of continuity for  $u^0$ with respect to $\po$ and recall that for any $\lambda\in \R$
$$
\dist^{\po}(E^{\psi_m}_\lambda(0), \R^N\setminus F^{\psi_n}_{\lambda+\e}(0)) = 
\dist^{\po}(\{ u^0\le \lambda\}, \{ u^0\ge \lambda + \e\}) \geq \omega^{-1}(\e).
$$

By \eqref{psihat}, \eqref{psihat2} and Theorem \ref{T7}, for all $n,m\ge j$ we have
$$
\dist^{\po}(E_{\lambda}^{\psi_m}(t), \R^N\setminus F_{\lambda+\e}^{ \psi_n}(t))  \geq \omega^{-1}(\e) e^{-2\beta Mt} - \delta_j
\frac{ 2c_0/  \omega^{-1}(\e) + \|g\|_\infty}{M} (1- e^{-2\beta Mt}),
$$
as long as the right-hand side  is larger than $\omega^{-1}(\e)/2$, that is, for all $t\in [0,T_0]$, provided $j$ is large enough.
In particular, for $n, m$ large enough $E_{\lambda}^{\psi_m}(t) \subset  F_{\lambda+\e}^{ \psi_n}(t)$ for all $t\in [0,T_0]$ which yields 
$$
u^{\psi_n}(\cdot, t)\leq u^{\psi_m}(\cdot, t)+\e \qquad\text{for all  $t\in [0, T_0]$.}
$$ 
By switching the role of $n$ and $m$ we  deduce  \eqref{step1eq}.

\noindent {\it Step 2.}  
First arguing as in the proof of Theorem \ref{th:exist} and using \eqref{psihat} we see that $\omega(e^{2\beta Mt} \cdot)$ is a spatial modulus of continuity for $u^{\psi_n}(\cdot,t)$ for all $n$.
Observe that from \eqref{step1eq} it follows that for $n,m$ large enough we have 
\[
E^{\psi_m}_\lambda(T_0)\subseteq E^{\psi_n}_{\lambda +\e}(T_0),
\]
which in turn implies
\begin{multline*}
\dist^{\po}(E^{\psi_m}_\lambda(T_0), \R^N\setminus F^{\psi_n}_{\lambda+2\e}(T_0)) \ge 
\dist^{\po}(E^{\psi_n}_{\lambda + \e}(T_0), \R^N\setminus F^{\psi_n}_{\lambda+2\e}(T_0))
\\ \geq \omega^{-1}(e^{2\beta MT_0} \e).
\end{multline*}
We can now argue as in the Step 1 to conclude that, for $n, \, m$ large enough 
\[
\|u^{\psi_n}-u^{\psi_m}\|_{L^{\infty}(\R^N\times[T_0,2T_0])}\leq 2\e.
\]
Therefore, by an easy iteration argument we can shown that, for every given $T>0$, the sequence $\{u^{\psi_n}\}$ is a Cauchy sequence in $L^\infty(\R^N\times [0,T])$. 
This concludes the proof of the theorem.
\end{proof}

We conlcude by recalling the following remarks, referring to \cite{CMNP17} for the details.
\begin{remark}[Stability]\label{remstab}
As a byproduct of the previous theorem, and a standard diagonal argument, we have the following stability property for solutions to \eqref{oee}: let  $\{\psi_n\}_{n\in\N}$ be a sequence of mobilities 
and $\phi_n$ a sequence of anisotropies such that
$\psi_n\to  \psi$ and $\phi_n\to  \phi$ as $n\to +\infty$, 
then $u^{\psi_n}$ converge to $u^{\psi}$ locally uniformly  in $\R^N\times[0,+\infty)$ as $h\to 0$
(where $u^{\psi_n}$ is the solution to \eqref{oee} with $\psi$ replaced by $\psi_n$ and 
$\phi$ replaced by $\phi_n$).
\end{remark}

\begin{remark}[Comparison with the Giga-Pozar solution]\label{rm:gigapozar}
When $\phi$ is purely crystalline and $g\equiv c$ for some $c\in \R$ the unique level set solution in the sense of Definition~\ref{deflevelset2} coincides with the viscosity solution constructed in \cite{GigaPozar, GigaPozar17}.  
\end{remark}

We also recall that when $g$ is constant, \eqref{oee} admits a phase-field approximation by means of anisotropic Allen-Cahn equation, see \cite[Remark 6.2]{CMNP17} for the details.
\smallskip

In the next theorem we recall the main properties of the level set solutions introduced in Definition~\ref{deflevelset2}.  In the statement of the theorem,  we will say that a uniformly continuous initial function $u^0$ is {\em well-prepared} at $\lambda\in\R$ if the following two conditions hold:
\begin{itemize}
\item[(a)] If $H\subset \R^N$ is a closed set such that $\dist(H, \{u_0\geq \lambda\})>0$, then there exists $\lambda'<\lambda$ such that $H\subseteq
\{u_0< \lambda'\}$;
\item[(b)] If $A\subset \R^N$ is an open  set such that  $\dist(\{u_0\leq \lambda\}, \R^N\setminus A)>0$,  then there exists $\lambda'>\lambda$ such that 
$\{u_0\leq \lambda'\}\subset A$.
\end{itemize}
Here dist$(\cdot,\cdot)$ denotes the distance function with respect to a given norm. Clearly, the properties stated in (a) and (b) above do not depend on the choice of such a norm.

\begin{remark}
Note that the above assumption of well-preparedness is automatically satisfied if the set $\{u_0\leq \lambda\}$ is bounded.
\end{remark}
\begin{theorem}[Properties of the level set flow]\label{th:proplevelset}
Let  $u^\psi$ be  a solution in the sense  Definition~\ref{deflevelset2}, with initial datum $u^0$. The following properties hold true:

{\rm (i) (Non-fattening of level sets)}  There exists a countable set $N\subset \R$ such that  for all  
$t\in [0,+\infty), \, \lambda \not \in N$
\beq\label{eq:nonfattening2}
\begin{array}{rcl}  
\{(x,t):\, u^\psi(x, t) < \lambda\} &=& \mathrm{Int\,}(\{(x,t)\,:\, u^\psi(x, t) \le \lambda\})\,,\vspace{5pt}\\ 
\overline {\{(x,t)\,:\, u^\psi(x, t) < \lambda\}} &=& \{(x,t)\,:\, u^\psi(x, t) \le \lambda\}.
\end{array}
\eeq

{\rm (ii) (Distributional formulation when $\psi$ is $\p$-regular)} If $\psi$ is $\phi$-regular, then $u^\psi$ coincides with the  distributional solution in the sense of Definition~\ref{deflevelset1}.

 {\rm (iii) (Comparison)} Assume that $u^0\leq v^0$ and denote the corresponding level set flows by $u^\psi$ and $v^\psi$, respectively. Then $u^\psi\leq v^\psi$.

 {\rm (iv) (Geometricity)} Let $f:\R\to \R$ be increasing and uniformly continuous. 
Then $u^\psi$ is a solution with initial datum $u^0$ if and only if $f\circ u^{\psi}$ is a solution with initial datum $f\circ u^0$.

 {\rm (v) (Independence of the initial level set function)} Assume that $u^0$ and $v^0$ are well-prepared at $\lambda$. If
$\{u^0<\lambda\}=\{v^0<\lambda\}$, then $\{u^\psi(\cdot, t)<\lambda\}=\{v^\psi(\cdot, t)<\lambda\}$ for all $t>0$. Analogously, if 
$\{u^0\leq \lambda\}=\{v^0\leq \lambda\}$, then  $\{u^\psi(\cdot, t)\leq \lambda\}=\{v^\psi(\cdot, t)\leq \lambda\}$ for all $t>0$.
\end{theorem}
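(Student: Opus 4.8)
The plan is to reduce each of the five assertions to its counterpart for $\p$-regular mobilities, for which $u^\psi$ is just the distributional level set solution of Definition~\ref{deflevelset1}, and then to pass to the limit along a sequence $\psi_n\to\psi$ of $\p$-regular mobilities, using the local uniform convergence $u^{\psi_n}\to u^\psi$ provided by Theorem~\ref{th:maingenmob}. The preliminary remark making this work is that $u^\psi$ is \emph{intrinsic}: by Theorem~\ref{th:exist} and the comparison principle of Theorem~\ref{th:lscomp}, for a $\p$-regular mobility the distributional level set solution exists and is unique, and Theorem~\ref{th:maingenmob} shows that its limit is independent of the chosen $\p$-regular approximation; hence $u^\psi$ coincides with the solution-via-approximation of \cite{CMNP17}, and properties (i)--(v) are exactly those established there. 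Below I indicate how each is obtained.

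\emph{Properties (ii), (iii), (iv).} Assertion (ii) is immediate: if $\psi$ is itself $\p$-regular, the constant sequence $\psi_n\equiv\psi$ is admissible in Definition~\ref{deflevelset2}, so by the uniqueness part of Theorem~\ref{th:maingenmob} the function $u^\psi$ equals $u^{\psi_n}$, i.e.\ the distributional solution of Definition~\ref{deflevelset1}. For (iii), choose $\p$-regular $\psi_n\to\psi$; Theorem~\ref{th:lscomp} gives $u^{\psi_n}\le v^{\psi_n}$ for every $n$, and letting $n\to\infty$ yields $u^\psi\le v^\psi$. For (iv), note that for a $\p$-regular mobility the notion of level set solution in Definition~\ref{deflevelset1} is purely geometric: it only requires the closed sublevels to be superflows and the open sublevels to be subflows in the sense of Definition~\ref{Defsol}, conditions on the sets alone. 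Since $f$ is continuous and increasing, the families of sublevels of $f\circ u^{\psi_n}$ and of $u^{\psi_n}$ agree up to a strictly increasing reparametrization of the level, so $f\circ u^{\psi_n}$ is the distributional level set solution with initial datum $f\circ u^0$; as $f$ is uniformly continuous, $f\circ u^{\psi_n}\to f\circ u^\psi$ locally uniformly, and Definition~\ref{deflevelset2} then identifies $f\circ u^\psi$ as the solution-via-approximation with datum $f\circ u^0$. The converse implication follows by composing with a continuous increasing extension of the inverse of $f$.

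\emph{Property (v).} Here I would use the geometric comparison principle, Theorem~\ref{thm:uniq}, at the level of the $\p$-regular approximations. Writing $\{u^0<\lambda\}=\bigcup_{\lambda'<\lambda}\{u^0\le\lambda'\}$ and using the well-preparedness of $u^0$ and $v^0$ at $\lambda$ together with $\{u^0<\lambda\}=\{v^0<\lambda\}$, for each compact $H\subset\{u^0<\lambda\}$ one produces $\lambda'<\lambda$ with $H\subset\{u^0\le\lambda'\}$ and $\dist^{\pso}(H,\R^N\setminus\{v^0<\lambda\})>0$; Theorem~\ref{thm:uniq} applied to the superflow $\{u^{\psi_n}(\cdot,t)\le\lambda'\}$ and to the complement of the superflow $\R^N\setminus\{v^{\psi_n}(\cdot,t)<\lambda\}$ keeps these at positive $\pso$-distance for all $t$, so that $\{u^{\psi_n}(\cdot,t)<\lambda\}\subseteq\{v^{\psi_n}(\cdot,t)<\lambda\}$; the reverse inclusion follows by symmetry, and one passes to the limit in $n$. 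The statement for the closed sublevels is obtained the same way, exchanging the roles of the two well-preparedness conditions.

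\emph{Property (i), and the main obstacle.} Non-fattening is the delicate point, and I expect it to be the core of the argument: a.e.-level information on $u^\psi$ alone does not exclude fattening, since a merely continuous function can have $|\{u=\lambda\}|=0$ yet $\{u<\lambda\}\subsetneq\mathrm{Int}\{u\le\lambda\}$ (e.g.\ a strict interior local maximum at level $\lambda$). The first, soft step is purely measure-theoretic: the space-time sets $\{u^\psi=\lambda\}$, $\lambda\in\R$, are pairwise disjoint, so for each $R,T\in\N$ only countably many of them have positive measure in $B_R\times[0,T]$; taking a countable union there is a countable $N\subset\R$ — which one may also enlarge so that for $\lambda\notin N$ the datum $u^0$ is well-prepared at $\lambda$ and $\{u^0<\lambda\}$, $\{u^0\le\lambda\}$ have the same closure and interior — with $|\{u^\psi=\lambda\}|=0$ for $\lambda\notin N$. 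The substantial step is to upgrade this to the two identities in \eqref{eq:nonfattening2}, which genuinely uses the dynamics: combining property (v) with the comparison principle one shows that, for $\lambda\notin N$, the closed set $\{u^\psi\le\lambda\}$ is a superflow and the closure of the open set $\{u^\psi<\lambda\}$ arises from a subflow, both emanating from the same initial set, so Theorem~\ref{thm:uniq} forces them to coincide, and the interior identity follows dually. This last step is the one requiring care, and it is carried out in detail in \cite{CMNP17}.
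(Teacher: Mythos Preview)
The paper itself gives no proof of this theorem: it simply writes ``For the proof we refer to \cite[Theorem~5.9]{CMNP17}.'' Your proposal is therefore already more detailed than what the paper provides, and the strategy you outline --- establishing (i)--(v) first for $\p$-regular mobilities via the distributional formulation and then passing to the limit along $\psi_n\to\psi$ using Theorem~\ref{th:maingenmob} --- is precisely the route taken in \cite{CMNP17}, so your approach matches the referenced proof.

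Two places where your sketch is loose and would need the details from \cite{CMNP17}: in (iv), the ``converse'' cannot be obtained by inverting $f$ since $f$ is only assumed increasing, not strictly increasing; rather, one uses the forward implication together with the uniqueness of the solution-via-approximation (Theorem~\ref{th:maingenmob}) applied to the initial datum $f\circ u^0$. In (v), passing the sublevel inclusions to the limit under locally uniform convergence requires some care (open sublevels are not stable under such limits in general), and this is where the well-preparedness hypothesis is genuinely used; the argument in \cite{CMNP17} handles this by working directly with the limiting flow rather than with the approximants.
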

For the proof  we refer to \cite[Theorem 5.9]{CMNP17}.
\smallskip

We conclude with a remark about conditions that prevent the occurrence of fattening.

\begin{remark}[Star-shaped sets, convex sets and graphs] It is well-known  \cite[Sec.~9]{Soner93} that for the motion without forcing,
strictly star-shaped sets do not develop fattening so that, in particular, their evolution is unique. 
The proof of this fact, given for instance
in~\cite{Soner93} for the mean curvature flow, works also for solutions in the sense of Definition~\ref{Defsol} when the mobility  $\psi$ is $\p$-regular, and in turn, by approximation, also for the {\em generalized motion} associated to level set solutions in the sense of Definition~\ref{deflevelset2}, when $\psi$ is general. Uniqueness also holds for motions with a time-dependent forcing
$g(t)$ \cite[Theorem~5]{BeCaChNo-volpres} as long as the set remains strictly
star-shaped.
This remark obviously applies to initial convex sets, which, in
addition, remain convex for all times,
as was shown in~\cite{BelCaChaNo,CaCha,BeCaChNo-volpres} with a spatially constant forcing term.\footnote{Convexity is preserved also with a spatially convex forcing term but uniqueness is not known in this case.}
The case of unbounded initial
convex sets was not considered in these references but can be easily addressed by approximation (and uniqueness still holds with the same proof).

In the same way, if the initial set $E_0=\{x_N\le v^0(x_1,\dots,x_{N-1})\}$ is
 the subgraph of a uniformly continuous functions $v^0$, and the forcing term
does not depend on $x_N$, then
one can show that fattening does not develop and $E(t)$ is still the subgraph of a uniformly continuous function for all $t>0$, as in the classical case~\cite{EckerHuisken,EvansSpruckIII} (see also~\cite{GigaGiga98} for the 2D crystalline case).
\end{remark}




\begin{thebibliography}{}

\end{thebibliography}


\begin{thebibliography}{10}

\bibitem{AlmTay95}
F. Almgren and J.~E. Taylor.
\newblock Flat flow is motion by crystalline curvature for curves with
  crystalline energies.
\newblock {\em J. Differential Geom.}, 42(1):1--22, 1995.

\bibitem{ATW}
F. Almgren, J.~E. Taylor, and L. Wang.
\newblock Curvature-driven flows: a variational approach.
\newblock {\em SIAM J. Control Optim.}, 31(2):387--438, 1993.

\bibitem{AmbrosioDancer}
L.~Ambrosio.
\newblock Geometric evolution problems, distance function and viscosity
  solutions.
\newblock In {\em Calculus of variations and partial differential equations
  ({P}isa, 1996)}, pages 5--93. Springer, Berlin, 2000.

\bibitem{AmbrosioSoner}
L. Ambrosio and H.~M. Soner.
\newblock Level set approach to mean curvature flow in arbitrary codimension.
\newblock {\em J. Differential Geom.}, 43(4):693--737, 1996.

\bibitem{AngGu89}
S. Angenent and M.~E. Gurtin.
\newblock Multiphase thermomechanics with interfacial structure. {II}.\
  {E}volution of an isothermal interface.
\newblock {\em Arch. Rational Mech. Anal.}, 108(4):323--391, 1989.

\bibitem{BarlesSouganidis91}
G.~Barles and P.~E. Souganidis.
\newblock Convergence of approximation schemes for fully nonlinear second order
  equations.
\newblock {\em Asymptotic Anal.}, 4(3):271--283, 1991.

\bibitem{Barles}
G. Barles.
\newblock {\em Solutions de viscosit\'e des \'equations de
  {H}amilton-{J}acobi}, vol.~17 of {\em Math\'ematiques \& Applications
  [Mathematics \& Applications]}.
\newblock Springer-Verlag, Paris, 1994.

\bibitem{BarlesSplitting}
G. Barles.
\newblock A new stability result for viscosity solutions of nonlinear parabolic
  equations with weak convergence in time.
\newblock {\em C. R. Math. Acad. Sci. Paris}, 343(3):173--178, 2006.

\bibitem{BarlesBook}
G. Barles.
\newblock An introduction to the theory of viscosity solutions for first-order
  {H}amilton-{J}acobi equations and applications.
\newblock In {\em Hamilton-{J}acobi equations: approximations, numerical
  analysis and applications}, volume 2074 of {\em Lecture Notes in Math.},
  pages 49--109. Springer, Heidelberg, 2013.

\bibitem{BaSoSou}
G. Barles, H.~M. Soner, and P.~E. Souganidis.
\newblock Front propagation and phase field theory.
\newblock {\em SIAM J. Control Optim.}, 31(2):439--469, 1993.

\bibitem{BarlSouga98}
G. Barles and P.~E. Souganidis.
\newblock A new approach to front propagation problems: theory and
  applications.
\newblock {\em Arch. Rational Mech. Anal.}, 141(3):237--296, 1998.

\bibitem{BelCaChaNo}
G. Bellettini, V. Caselles, A. Chambolle, and M. Novaga.
\newblock Crystalline mean curvature flow of convex sets.
\newblock {\em Arch. Ration. Mech. Anal.}, 179(1):109--152, 2006.

\bibitem{BeCaChNo-volpres}
G. Bellettini, V. Caselles, A. Chambolle, and M. Novaga.
\newblock The volume preserving crystalline mean curvature flow of convex sets
  in {$\mathbb{R}^N$}.
\newblock {\em J. Math. Pures Appl. (9)}, 92(5):499--527, 2009.

\bibitem{CaCha}
V. Caselles and A. Chambolle.
\newblock Anisotropic curvature-driven flow of convex sets.
\newblock {\em Nonlinear Anal.}, 65(8):1547--1577, 2006.

\bibitem{Chambolle}
A. Chambolle.
\newblock An algorithm for mean curvature motion.
\newblock {\em Interfaces Free Bound.}, 6(2):195--218, 2004.

\bibitem{CMNP17}
A. Chambolle, M. Morini, M. Novaga, and M.  Ponsiglione.
\newblock Existence and uniqueness for anisotropic and crystalline mean
  curvature flows.
\newblock Preprint arXiv:1702.03094, 2017.

\bibitem{CMP4}
A. Chambolle, M. Morini, and M. Ponsiglione.
\newblock Existence and uniqueness for a crystalline mean curvature flow.
\newblock {\em Comm. Pure Appl. Math.}, 70(6):1023--1220, 2017.

\bibitem{ChaNov-Crystal15}
A. Chambolle and M. Novaga.
\newblock Existence and uniqueness for planar anisotropic and crystalline
  curvature flow.
\newblock In {\em
  Variational Methods for Evolving Objects}, volume~67 of {\em Advanced Studies
  in Pure Mathematics}, pages 87--113. Mathematical Society of Japan, 2015.

\bibitem{CGG}
Y.~G. Chen, Y. Giga, and S. Goto.
\newblock Uniqueness and existence of viscosity solutions of generalized mean
  curvature flow equations.
\newblock {\em J. Differential Geom.}, 33(3):749--786, 1991.

\bibitem{CIL}
M.~G. Crandall, H.~Ishii, and P.-L. Lions.
\newblock User's guide to viscosity solutions of second order partial
  differential equations.
\newblock {\em Bull. Amer. Math. Soc. (N.S.)}, 27(1):1--67, 1992.

\bibitem{EckerHuisken}
K. Ecker and G. Huisken.
\newblock Mean curvature evolution of entire graphs.
\newblock {\em Ann. of Math. (2)}, 130(3):453--471, 1989.

\bibitem{EvansSpruckIII}
L.~C. Evans and J.~Spruck.
\newblock Motion of level sets by mean curvature. {III}.
\newblock {\em J. Geom. Anal.}, 2(2):121--150, 1992.

\bibitem{EvansSpruckI}
L.~C. Evans and J. Spruck.
\newblock Motion of level sets by mean curvature. {I}.
\newblock {\em J. Differential Geom.}, 33(3):635--681, 1991.

\bibitem{EvansSpruckII}
L.~C. Evans and J. Spruck.
\newblock Motion of level sets by mean curvature. {II}.
\newblock {\em Trans. Amer. Math. Soc.}, 330(1):321--332, 1992.

\bibitem{FonsecaMuller}
I. Fonseca and S. M{\"u}ller.
\newblock A uniqueness proof for the {W}ulff theorem.
\newblock {\em Proc. Roy. Soc. Edinburgh Sect. A}, 119(1-2):125--136, 1991.

\bibitem{GigaGiga98}
M.-H. Giga and Y. Giga.
\newblock Evolving graphs by singular weighted curvature.
\newblock {\em Arch. Rational Mech. Anal.}, 141(2):117--198, 1998.

\bibitem{GigaGiga01}
M.-H. Giga and Y. Giga.
\newblock Generalized motion by nonlocal curvature in the plane.
\newblock {\em Arch. Ration. Mech. Anal.}, 159(4):295--333, 2001.

\bibitem{GigaGigaPozar}
M.-H. Giga, Y. Giga, and N. Po{\v{z}}{\'a}r.
\newblock Periodic total variation flow of non-divergence type in
  {$\mathbb{R}^n$}.
\newblock {\em J. Math. Pures Appl. (9)}, 102(1):203--233, 2014.

\bibitem{GGIS91}
Y.~Giga, S.~Goto, H.~Ishii, and M.-H. Sato.
\newblock Comparison principle and convexity preserving properties for singular
  degenerate parabolic equations on unbounded domains.
\newblock {\em Indiana Univ. Math. J.}, 40(2):443--470, 1991.

\bibitem{GigaBook}
Y. Giga.
\newblock {\em Surface evolution equations. A level set approach}, volume~99 of
  {\em Monographs in Mathematics}.
\newblock Birkh\"auser Verlag, Basel, 2006.

\bibitem{GiGu96}
Y. Giga and M.~E. Gurtin.
\newblock A comparison theorem for crystalline evolution in the plane.
\newblock {\em Quart. Appl. Math.}, 54(4):727--737, 1996.

\bibitem{GiGuMa98}
Y. Giga, M.~E. Gurtin, and J. Matias.
\newblock On the dynamics of crystalline motions.
\newblock {\em Japan J. Indust. Appl. Math.}, 15(1):7--50, 1998.

\bibitem{GigaPozar}
Y. Giga and N. Po{\v{z}}{\'a}r.
\newblock {A level set crystalline mean curvature flow of surfaces}.
\newblock Preprint arXiv:1601.01802, 2016.

\bibitem{GigaPozar17}
Y. Giga and N. Po{\v{z}}{\'a}r.
\newblock Approximation of general facets by regular facets with respect to
  anisotropic total variation energies and its application to the crystalline
  mean curvature flow.
\newblock Preprint arXiv:1702.05220, 2017.

\bibitem{Gurtin93}
M.~E. Gurtin.
\newblock {\em Thermomechanics of evolving phase boundaries in the plane}.
\newblock Oxford Mathematical Monographs. The Clarendon Press, Oxford
  University Press, New York, 1993.

\bibitem{MNP}
G. Mercier, M. Novaga, and P. Pozzi.
\newblock Anisotropic curvature flow of immersed curves.
\newblock {\em  Comm. Anal. Geom.}, to appear.

\bibitem{OS}
S. Osher and J.~A. Sethian.
\newblock Fronts propagating with curvature-dependent speed: algorithms based
  on {H}amilton-{J}acobi formulations.
\newblock {\em J. Comput. Phys.}, 79(1):12--49, 1988.

\bibitem{Soner93}
H.~M. Soner.
\newblock Motion of a set by the curvature of its boundary.
\newblock {\em J. Differential Equations}, 101(2):313--372, 1993.

\bibitem{Taylor78}
J.~E. Taylor.
\newblock Crystalline variational problems.
\newblock {\em Bull. Amer. Math. Soc.}, 84(4):568--588, 1978.

\end{thebibliography}

\end{document}